\newtheorem{theorem}{Theorem}[section]
\newtheorem{corollary}[theorem]{Corollary}
\newtheorem{lemma}[theorem]{Lemma}
\newtheorem{proposition}[theorem]{Proposition}
\theoremstyle{definition}
\newtheorem{definition}[theorem]{Definition}
\newtheorem{example}[theorem]{Example}
\theoremstyle{remark}
\newtheorem{remark}[theorem]{Remark}
\numberwithin{equation}{section}
\newcommand{\Fin}{\mathrm{Fin}}
\newcommand{\SPAN}{\mathrm{SPAN}}
\newcommand{\supp}{\mathrm{supp}}
\newcommand{\Cost}{\mathrm{Cost}}
\newcommand{\Sub}{\mathrm{Sub}}
\newcommand{\Sgr}{\mathcal{S}\mathrm{gr}}
\newcommand{\Hom}{\mathrm{Hom}}
\newcommand{\Core}{\mathrm{Core}}
\newcommand{\HT}{\mathrm{HT}}
\newcommand{\Amm}{\mathrm{Amm}}
\newcommand{\CF}{\mathrm{CF}}
\newcommand{\PSL}{\mathrm{PSL}}
\newcommand{\Sym}{\mathrm{Sym}}
\newcommand{\Aut}{\mathrm{Aut}}
\newcommand{\Sch}{\mathrm{Sch}}
\newcommand{\IRS}{\mathrm{IRS}}
\newcommand{\Lk}{\mathrm{Lk}}
\newcommand{\HEr}{\mathrm{I}}
\newcommand{\HErE}{\mathrm{Ie}}
\newcommand{\arrow}{\rightarrow}
\newcommand{\trivgp}{\langle e \rangle}
\newcommand{\N}{\mathbf N}
\newcommand{\Z}{\mathbf Z}
\newcommand{\F}{\mathbb F}
\newcommand{\Bc}{\mathcal{B}}
\newcommand{\leftIRS}{\leftslice}
\subjclass[2010]{Primary 37A20, Secondary 20B22,37A15,43A07}%
\keywords{IRS, Free groups}%
\begin{document}
\title{Generic IRS in free groups, after Bowen}%
\author{Amichai Eisenmann}
\author{Yair Glasner}

\maketitle




\begin{abstract}
Let $E$ be a measure preserving equivalence relation, with countable equivalence classes,  on a standard Borel probability space $(X,\Bc,\mu)$. Let $([E],d_{u})$ be the the (Polish) full group endowed with the uniform metric. If $\F_r = \langle s_1, \ldots, s_r \rangle$ is a free group on $r$-generators and $\alpha \in \Hom(\F_r,[E])$ then the stabilizer of a $\mu$-random point $\alpha(\F_r)_x \leftIRS \F_r$ is a random subgroup of $\F_r$ whose distribution is conjugation invariant. Such an object is known as an {\it{invariant random subgroup}} or an {\it{IRS}} for short. Bowen's generic model for IRS in $\F_r$ is obtained by taking $\alpha$ to be a Baire generic element in the Polish space $\Hom(\F_r, [E])$. The {\it{lean aperiodic model}} is a similar model where one forces $\alpha(\F_r)$ to have infinite orbits by imposing that $\alpha(s_1)$ be aperiodic. 

In this setting we show that for $r < \infty$ the generic IRS $\alpha(\F_r)_x \leftIRS \F_r$ is of finite index almost surely if and only if $E = E_0$ is the hyperfinite equivalence relation. For any ergodic equivalence relation we show that a generic IRS coming from the lean aperiodic model is co-amenable and core free. Finally, we consider the situation where $\alpha(\F_r)$ is highly transitive on almost every orbit and in particular the corresponding IRS is supported on maximal subgroups. Using a result of Le-Ma\^{i}tre we show that such examples exist for any aperiodic ergodic $E$ of finite cost. For the hyperfinite equivalence relation $E_0$ we show that high transitivity is generic in the lean aperiodic model.   
\end{abstract}

\section{Introduction}
Let $\Gamma$ be a countable group, $\{0,1\}^{\Gamma}$ the space of all subsets of $\Gamma$ with the compact Tychonoff topology. The collection of all subgroups $\Sub(\Gamma) \subset \{0,1\}^{\Gamma}$ is closed and therefore a compact metrizable space in its own right. The induced topology on $\Sub(\Gamma)$ is known as the {\it{Chabauty topology}} (see \cite{Chabauty:top}).  The group $\Gamma$ acts continuously by conjugation on $\Sub(\Gamma)$.
\begin{definition}
An {\it{Invariant random subgroup}}, or $\IRS$ for short, of $\Gamma$ is a $\Gamma$ invariant Borel probability measure on $\Sub(\Gamma)$. We will also say that $\Delta \in \Sub(\Gamma)$ is an {\it{invariant random subgroup}} and write $\Delta \leftIRS \Gamma$ to signify that such an invariant probability measure has been fixed and that $\Delta$ is a random subgroup chosen according to this distribution. We write $\IRS(\Gamma)$ and $\IRS^e(\Gamma)$ to denote the collection of all (resp. all ergodic) invariant measures on $\Sub(\Gamma)$.
\end{definition}
Note that $\IRS(\Gamma)$ can be viewed as a simplex in the dual Banach space $C(\Sub(\Gamma))^{*}$ and $\IRS^e(\Gamma)$ is the collection of extreme points of this simplex.

A similar notion can be defined in the setting of a locally compact group $G$ - see \cite{7_sam,7_sam:ann}. In this setting $\Sub(G)$ denotes the collection of all closed subgroups of $G$ which is again a compact metrizable space. During the last few years IRS turned out to be surprisingly useful in a wide array of mathematical branches. We refer to the following papers and the references in them \cite{AGV:Kesten_IRS, AGV:Kesten_Measureable} for spectral graph theory, \cite{7_sam} for representation theory and asymptotic invariants of Lie groups and their lattices, \cite{TD:shift_minimal,PT:stab_erg} for applications to operator algebras, and \cite{BDL:amenable_irs,G:linear_irs} for some structure theory of IRS.

\begin{example} \label{eg:pmp}
Let $\Gamma \curvearrowright (X,\Bc,\mu)$ be a measure preserving action of $\Gamma$ on a probability space. Then there is a natural map $\Phi: X \arrow \Sub(\Gamma)$ defined by $x \mapsto \Phi(x)= \Gamma_x$. Since the action is measure preserving and the map $\Phi$ is $\Gamma$ equivariant, the image of the measure $\Phi_*(\mu) \in \IRS(\Gamma)$. In short one can just say that $\Gamma_x \leftIRS \Gamma$ for every probability preserving action as above. In fact in \cite[Proposition 13]{AGV:Kesten_IRS} it is shown that this example is universal in the sense that every $\nu \in \IRS(\Gamma)$ can be obtained by such a construction whenever $\Gamma$ is finitely generated. This was generalized to locally compact groups in \cite{7_sam}.
\end{example}

The oldest, and one of the deepest results to date concerning IRS, due to Stuck-Zimmer \cite{SZ} is the full classification of $\IRS(G)$ where $G$ is a higher rank semi-simple Lie group, or a lattice thereof. In both cases every $\mu \in \IRS^e(G)$ is supported on an orbit (i.e. a conjugacy class) either of a finite central subgroup or of a discrete group $H$ of finite co-volume - when $G$ is a Lie group $H < G$ is a lattice and when $G$ is a lattice $H$ is just a finite index subgroup. Other structure results singling out groups in which the space $\IRS(\Gamma)$ is small are due to Vershik \cite{Vershik:characters} ($G = \Sym^{f}_{\infty}$ the infinite finitary permutation group), Peterson-Thom  \cite{PT:stab_erg} ($\PSL_n(k)$ where $k$ is a countable field).

Let $\F_r = \langle S \rangle$ be the non-abelian free group on $r$ generators $S = \{s_1,s_2,\ldots,s_r\}$. In contrary to the above results, Lewis Bowen \cite{bowen:irs_free} studies $\IRS(\F_r)$ using a variety of methods and finds a very rich structure.

\subsection{Bowen's model of generic IRS in $\F_r$}
One of the methods that Bowen introduces is that of a {\it{generic IRS}}. We give only a short survey of this method and refer the readers to the original paper \cite{bowen:irs_free} and also \cite{bowen:furst_ent} for more details. We use the word {\it{generic}} in the Baire category sense of the word. A generic model for an IRS will consist of a Polish (i.e. metrizable, separable and complete) topological space $\HEr$ together with a map $f: \HEr \arrow \IRS(\F_r)$. Given such a model we will look for properties of the IRS $f(x)$ that hold for every $x$ in a residual (e.g. a dense $G_{\delta})$ subset of $\HEr$.

The group $A:=\Aut(X,\Bc,\mu)$ itself has a natural Polish structure coming from the weak topology. Thus the space $\Hom(\F_r, A) \cong A^r$ is also Polish. As explained in Example \ref{eg:pmp} above to any $\alpha \in \Hom(\F_r,A)$ one can associate the $\IRS$ $\alpha(\Gamma)_x \leftIRS \Gamma$.  But as it turns out, for a residual set of $\alpha$'s, this $\IRS$ is almost surely trivial. Bowen's idea was to fix in advance a Borel equivalence relation $E \subset X \times X$ with countable classes. Assume that $\mu$ is $E$ invariant and that $E$ is aperiodic, in the sense of the following:
\begin{definition}
An equivalence relation $[F]$ is called {\it{periodic}} if its equivalence classes are finite almost surely. It is called {\it{aperiodic}} if its equivalence classes are infinite almost surely.
\end{definition}
\noindent To such an equivalence relation we can associate its full group:
\begin{definition}
Given an equivalence relation $E$ as above let $[E]$ denote the {\it{full group}} of this equivalence relation:
$$[E] = \{g : X \arrow X \ | \ g {\text{ is a Borel isomorphism and }} xEgx \ \forall x \in X \},$$
endowed with the uniform metric
$$d(\phi, \psi) = \mu(\{x \in X \ | \ \phi(x) \ne \psi(x) \} ).$$
\end{definition}
The uniform metric can be defined on the whole group $\Aut(X,\Bc,\mu)$ but it gives rise to a non-separable topological group. When restricted to the full group $[E]$ the uniform metric gives rise to a Polish group structure as proven in \cite[Proposition 3.2]{Kechris:Global_aspects}.

Once this Polish group topology has been fixed, we can define the following Polish space:
\begin{eqnarray} \label{eqn:pol_space}
&& \nonumber \Hom(\F_r, [E]) \cong [E]^r, \\
&& d_u(\alpha,\beta) = \sup_{i =1 \ldots r} \{d_u(\alpha(s_i), \beta(s_i)) \} \ \alpha,\beta \in \Hom(\F_r,[E]).
\end{eqnarray}
The Gaboriau-Levitt theory of {\it{cost}} will play an important part. $\Cost(E)$ is an invariant associated with the equivalence relation. A famous Theorem of Gaboriau \cite[Corollaire 1]{Gab:cout} shows that the cost of the orbit-equivalence relation coming from a free action of $\F_r$ is exactly $r$. Thus if $r > \Cost(E)$ the action given by any $\phi: \F_r \arrow [E]$ cannot be essentially free and consequently the associated IRS cannot be almost surely trivial.

\subsection{Main results}
In many of our theorems we address the question of when the IRS arising from a generic $\alpha \in \Hom(\F_r,[E])$ is large. The word large is considered  here in various different meanings.  The most obvious one is when the IRS is of finite index, or in other words if the $\alpha(\F_r)$-orbit equivalence relation is periodic. As it turns out, this depends on whether the equivalence relation is hyperfinite or not.
\begin{definition}
A Borel equivalence relation $E$ is called {\it{hyperfinite}} if it can be expressed as the ascending union of Borel sub-equivalence relations $E = \cup_n F_n$ where $F_n \subset F_{n+1}$ and each $F_n$ is periodic.
\end{definition}
It is a famous result of Orenstein-Weiss \cite{OW} that every action of an amenable group gives rise to a hyperfinite equivalence relation.
\begin{theorem} \label{thm:periodic_hf}
Let $E$ be a Borel, measure preserving equivalence relation with countable equivalence classes on a standard probability space $(X,\Bc,\mu)$. For $\alpha \in \Hom(\F_r,[E])$ consider the associated IRS $\alpha(\F_r)_x \leftIRS \F_r$ and its index $I_{\alpha}:=[\F_r: \alpha(\F_r)_x]$ - an integer valued random variable. Then for a generic $\alpha \in \Hom(\F_r, [E])$ the following holds:
\begin{enumerate}
\item \label{itm:inf_gen} {\bf{For $\bf{r = \infty}$.}} $\alpha(F_{\infty})$ spans the equivalence relation. In particular  $I_{\alpha} = \infty$ almost surely.
\item \label{itm:hyperfinite} {\bf{For $\bf{2 \le r < \infty}$}}. The following conditions are equivalent:
\begin{itemize}
\item $E$ is hyperfinite.
\item $I_{\alpha} < \infty$ almost surely.
\end{itemize}
\end{enumerate}
\end{theorem}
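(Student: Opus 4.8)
The plan follows the three regimes in the statement.

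\emph{The case $r=\infty$.} I would first show that $\mathcal A=\{\alpha:\langle\alpha(s_i):i\in\N\rangle\text{ is dense in }[E]\}$ is a dense $G_\delta$ in $\Hom(\F_\infty,[E])$. Fixing a countable dense $\{h_m\}\subset[E]$, each $U_{m,k}=\{\alpha:d_u(\alpha(w),h_m)<1/k\text{ for some word }w\}$ is open (a small perturbation of the finitely many generators occurring in $w$ moves $\alpha(w)$ continuously) and dense (given constraints on $\alpha(s_1),\dots,\alpha(s_N)$, set $\alpha(s_{N+1})=h_m$ and take $w=s_{N+1}$), so $\mathcal A=\bigcap_{m,k}U_{m,k}$ works. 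For $\alpha\in\mathcal A$ and any $g$ lying in a countable group $\Lambda\subset[E]$ with $E_\Lambda=E$ (Feldman--Moore), pick $\gamma_n\in\langle\alpha(s_i)\rangle$ with $\sum_n d_u(\gamma_n,g)<\infty$; by Borel--Cantelli $\gamma_n x=gx$ eventually for a.e.\ $x$, so $(x,gx)\in E_\alpha$ a.e. Hence $E_\alpha=E$ mod null, so $I_\alpha=[\F_\infty:\alpha(\F_\infty)_x]$ equals the cardinality of the $E$-class of $x$; in particular $I_\alpha=\infty$ a.s.\ when the classes are infinite.

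\emph{The hyperfinite case.} For $r<\infty$ put $D(\alpha)=\mu(\{x:|\alpha(\F_r)x|=\infty\})$, so that $I_\alpha<\infty$ a.s.\ exactly when $D(\alpha)=0$. Writing $D=\inf_N D_N$ with $D_N(\alpha)=\mu(\{x:|\alpha(\F_r)x|>N\})$, each $D_N$ is continuous: an orbit with more than $N$ points already contains $N+1$ distinct points $\alpha(w)x$ with $|w|\le N$, so $\{x:|\alpha(\F_r)x|>N\}$ is a finite union of sets $\bigcap_{j<k}(X\setminus\Fix(\alpha(w_j^{-1}w_k)))$, and $g\mapsto\mu(\Fix(g))$ is $1$-Lipschitz for $d_u$. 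Thus $D$ is upper semicontinuous and $\{D=0\}=\bigcap_k\{D<1/k\}$ is $G_\delta$. So the hyperfinite implication reduces to showing $\{D=0\}$ is \emph{dense} (then, being $G_\delta$, it is residual): given $E=\bigcup_n F_n$ with $F_n$ periodic and increasing, $\beta\in[E]^r$ and $\epsilon>0$, choose for each $i$ an index $n_i$ with $\mu(\{x:\beta(s_i)x\in[x]_{F_{n_i}}\})>1-\epsilon$, extend the resulting partial $F_{n_i}$-isomorphism to $\alpha(s_i)\in[F_{n_i}]$ class by class (the classes being finite: Hall's marriage theorem and a Borel selection), and set $n=\max_i n_i$; then $\alpha\in[F_n]^r$, every $\alpha(\F_r)$-orbit lies in a finite $F_n$-class, $D(\alpha)=0$, and $d_u(\alpha,\beta)<\epsilon$.

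\emph{The non-hyperfinite case, and the main obstacle.} Now suppose $E$ is not hyperfinite; I want to place inside every ball $B(\beta,\delta)\subset[E]^r$ a subball on which $D$ is \emph{everywhere} positive — then $\{D=0\}$ is nowhere dense, hence meager, hence $\{D>0\}$ is residual. Since $E$ is non-amenable (Connes--Feldman--Weiss) there is a positive measure set $Z$ of arbitrarily small measure with $E|_Z$ non-amenable (restrict to a non-amenable ergodic component, or a union of such, and shrink); by the Gaboriau--Lyons theorem, after a further restriction $E|_Z$ contains the orbit relation of a free p.m.p.\ action $\rho$ of $\F_2=\langle s_1,s_2\rangle$. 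Let $\beta'$ agree with $\rho$ on $Z$ in the first two coordinates and with $\beta$ off $Z$ (with the routine small correction making $\beta'(s_1),\beta'(s_2)$ bijective), and $\beta'(s_i)=\beta(s_i)$ for $i>2$; for $\mu(Z)$ small enough $\beta'\in B(\beta,\delta)$. The point is then to produce $\delta'>0$ with $D(\alpha)\ge\tfrac12\mu(Z)$ for every $\alpha\in B(\beta',\delta')$: for such $\alpha$ the Schreier graphing of $\langle\alpha(s_1),\alpha(s_2)\rangle$ differs from the $4$-regular tree carried by $\rho$ on $Z$ only over a vertex set of measure $<2\delta'$; if the component of $z\in Z$ is finite, its trace $C$ on $Z$ has all $\rho$-edges leaving $C$ inside $Z$ broken, so $\partial_\rho C$ lies in that modification set, and the isoperimetric inequality of the $4$-regular tree ($|C|\le c^{-1}|\partial_\rho C|$, $c>0$ fixed) together with a mass-transport count gives $\mu(\{z\in Z:\text{component finite}\})=O(\delta'/c)$; choosing $\delta'$ small finishes it. The hard part is exactly this last estimate — turning ``a regular tree remains infinite under a sparse adversarial perturbation'' into a rigorous measured statement, balancing broken edges against component boundaries and absorbing the error from $\alpha$ not exactly preserving $Z$. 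The remaining ingredients (the upper-semicontinuity computation, the Hall-type approximation, and the input of Gaboriau--Lyons) are comparatively routine.
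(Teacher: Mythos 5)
Your case $r=\infty$ and the ``hyperfinite $\Rightarrow$ generically $I_\alpha<\infty$'' half are essentially correct. For $r=\infty$ you prove the slightly stronger fact that a generic image is dense in $[E]$ and then deduce spanning via Feldman--Moore; the paper proves spanning directly, but both work. Two small repairs: the statement ``$I_\alpha$ equals the cardinality of the $E$-class'' needs the aperiodicity of $E$ to conclude $I_\alpha=\infty$ a.s.\ (as you note), and in your $G_\delta$ argument the Lipschitz property of $g\mapsto\mu(\Fix(g))$ alone does not control the measure of the Boolean combinations defining $\{x: |\alpha(\F_r)x|>N\}$; what you actually need is the symmetric-difference bound $\mu\{x:\alpha(w)x\neq\beta(w)x\}\le \ell(w)\,d_u(\alpha,\beta)$ for the finitely many words involved (this is the paper's Lemma \ref{lem:cont}, feeding into Lemma \ref{lem:Gd}), after which your upper-semicontinuity argument goes through. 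Your ``fill in each finite $F_n$-class'' density argument in the hyperfinite case is correct and arguably cleaner than the paper's first-return construction (no Hall's theorem is needed, just that the two leftover subsets of a finite class have equal cardinality).

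The genuine gap is in the non-hyperfinite direction, and it is not the estimate you flag. (i) Gaboriau--Lyons does not apply to an arbitrary non-amenable p.m.p.\ equivalence relation $E|_Z$: it is a theorem about orbit relations of Bernoulli shifts of non-amenable groups, and whether every non-hyperfinite p.m.p.\ equivalence relation contains (even after restriction to a positive-measure set) the orbit relation of a free p.m.p.\ action of $\F_2$ is an open problem. So the very first step of your perturbation scheme is unjustified for general $E$. (ii) The estimate you call ``the hard part'' is in fact fillable along the lines you sketch: every tree-edge leaving the trace $C$ of a finite $\alpha$-orbit on a $\rho$-class must be broken at its $C$-endpoint, tree isoperimetry gives $|C|\le 2|M\cap C|$ orbit by orbit, and integrating over the finite classes yields $\mu\{z\in Z: \text{finite component}\}\le 8\delta'$; so (i) is the real obstruction. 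Moreover the heavy machinery is unnecessary: the paper's converse is elementary -- Lemma \ref{lem:per_dense} (a uniform limit of periodic actions generates a hyperfinite relation) together with Feldman--Moore and the fact that a hyperfinite relation is generated by a single automorphism shows, by induction on the number of generators, that density of $\Fin(r)$ forces $E$ itself to be hyperfinite. If you insist on your stronger ``nowhere dense'' conclusion when $E$ is not hyperfinite, the same tools suffice: were $\Fin(r)$ non-meager it would be dense in some ball, so every element of that ball would generate a hyperfinite relation; but the Feldman--Moore induction gives a $2$-generated non-hyperfinite subrelation of $E|_Z$ for a small complete section $Z$, and Lemma \ref{lem:rep} lets you plant it inside any ball, a contradiction -- no free $\F_2$, no expansion argument needed.
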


One of the main reasons for considering generic models for IRS is as a rich source of examples. As such finite index IRS are not interesting. It is therefore desirable to find a generic model that forces the IRS to be of infinite index. The most natural way to do this is to set $\Hom'(\F_r,[E]) := \{\alpha \in \Hom(\F_r,[E]) \ | \ [\F_r: \alpha(\F_r)_x] = \infty \ {\text{almost surely}}\}$. This turns out to be a $G_{\delta}$ subset of $\Hom(\F_r,[E])$ and therefore, even though it might be meager, it is a Polish space in its own right and can serve as a model for infinite index $\IRS$. This was the strategy treated by Bowen, we adopt a different model which is perhaps less natural but much easier to work with. We force the action to be non-periodic by demanding that the first generator $S:=\alpha(s_1)$ already is aperiodic.
\begin{definition} \label{def:easy_ap}
For every $r$ and every aperiodic, measure preserving equivalence relation $E$, with countable classes, on a standard probability space we define the {\it{lean aperiodic model}} to be
$$\HEr = \HEr(r,E) = \{ \alpha \in \Hom(\F_r, [E]) \ | \ \alpha(s_1) {\text{ is aperiodic}} \} \subset \Hom(\F_r,[E]).$$
\end{definition}
\begin{remark} \label{rem:ap->erg}
Theorem 3.6 in \cite{Kechris:Global_aspects} says that whenever $E$ is ergodic the set $\HErE := \{\alpha \in \HEr \ | \ \alpha(s_1) {\text{ is ergodic}} \}$ is residual in $\HEr$. Thus in this case any generic statement in $\HErE$ is true also in $\HEr$ and vice versa. 
\end{remark}
Co-amenable subgroups, as defined below, generalize finite index subgroups, in much the same way that amenable subgroups generalize finite subgroups. In particular a normal subgroup $N \lhd \Gamma$ is co-amenable iff $\Gamma/N$ is amenable. 
\begin{definition}
A subgroup $\Delta < \Gamma$ is called {\it{co-amenable}} if one and hence all of the following equivalent conditions hold: (i) there is a $\Gamma$ invariant mean on $\Gamma/\Delta$; (ii) there exists a sequence of F{\o}lner sets $F_n \subset \Gamma/\Delta$ such that $\lim_{n \arrow \infty} \left|\gamma F_n \Delta  F_n \right| /\left| F_n \right| =0$ for every $\gamma \in \Gamma$; (iii) for every continuous affine action of $\Gamma$ on a compact convex subspace $C$ of a locally convex space, the existence of a $\Delta$ fixed point implies the existence of a $\Gamma$ fixed point.
\end{definition}
Our next theorem says that for ergodic equivalence relations co-amenability is generic in the lean aperiodic model. 
\begin{theorem} \label{thm:co-am}
Let $E$ be an ergodic, measure preserving equivalence relation with countable equivalence classes on a standard Borel probability space $(X,\Bc,\mu)$. Let $\HEr$ be the lean aperiodic model defined above. Then for a residual set of $\alpha \in \HEr$ the associated IRS $\alpha(\F_r)_x \leftIRS \F_r$ is co-amenable almost surely. 
\end{theorem}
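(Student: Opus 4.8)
The plan is to produce a dense $G_\delta$ subset of $\HEr$ on which the conclusion holds. Recall that a subgroup $\Delta<\F_r$ is co-amenable exactly when its Schreier coset graph $\mathrm{Sch}(\F_r/\Delta,S)$ is an amenable graph: for every $\epsilon>0$ there is a finite nonempty $A$ with $|\partial A|<\epsilon|A|$, where $\partial A$ is the set of Schreier edges leaving $A$. Via the $\F_r$-equivariant bijection $\F_r/\alpha(\F_r)_x\cong\alpha(\F_r)x$ this is precisely the rooted Schreier graph $G^\alpha_x$ of the action $\alpha(\F_r)\curvearrowright X$ at $x$, with edges labelled by $s_1,\dots,s_r$. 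For $k\in\N$ set
\[
U_k=\Bigl\{\alpha\in\HEr:\ \mu\bigl\{x:\ \exists\ \text{finite }A\ni x\text{ in }G^\alpha_x\text{ with }|\partial A|<\tfrac1k|A|\bigr\}>1-\tfrac1k\Bigr\}.
\]
The event defining the inner set is decreasing in $k$ and its intersection over all $k$ equals $\{x:\alpha(\F_r)_x\text{ is co-amenable}\}$; hence for $\alpha\in\bigcap_kU_k$ this last set is $\mu$-conull, i.e.\ the associated IRS is co-amenable almost surely. It therefore suffices to show that each $U_k$ contains a dense open (or at least a dense $G_\delta$) subset of $\HEr$.

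For the topological regularity of $U_k$ one uses the uniform topology as in \cite{Kechris:Global_aspects}: whether a prescribed finite combinatorial pattern occurs around $x$ in $G^\alpha_x$ with boundary ratio $<\tfrac1k$ depends only on the restriction of $\alpha$ to a ball of bounded radius in the orbit of $x$, and a $d_u$-perturbation of size $<\eta$ alters these bounded balls only for $x$ in a set of measure $O(\eta)$; summing over the countably many patterns shows that $\alpha\mapsto\mu\{x:\cdots\}$ is lower semicontinuous, whence $U_k$ is open. I would carry this out by a Fubini argument over the labelled edge set of the Schreier graphing, in the spirit of the continuity statements underlying Theorem~\ref{thm:periodic_hf} and Remark~\ref{rem:ap->erg}.

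The substance is density of $U_k$, and this is where the aperiodicity built into the lean model enters. Fix $\alpha_0\in\HEr$ and $\delta>0$; by Remark~\ref{rem:ap->erg} we may assume $T:=\alpha_0(s_1)$ is ergodic. Using the Rokhlin lemma for $T$, choose a tower of height $N$ (with $k\ll N$ and $\delta^{-1}\ll N$) with base $B$, so that $X=Z\sqcup\bigsqcup_{0\le j<N}T^jB$ and $\mu(Z)<\delta$, and cut the height into consecutive blocks of length $\ell$, $k\ll\ell\ll N$. I would then modify $\alpha_0$ only on $Z$ together with a sparse union of levels near the block tops — a set of measure $<2\delta$ — to obtain $\beta\in\HEr$ with $d_u(\alpha_0,\beta)<2\delta$, designed so that, for all but a $\tfrac1k$-fraction of $x$, the $\beta(\F_r)$-orbit of $x$ is threaded along the $\beta(s_1)$-line and broken into the induced finite blocks, with the $\beta(s_i)$-edges ($i\ge2$) issuing from a block landing back in the same block after the modification; a union of consecutive blocks along the $\beta(s_1)$-line is then a $\tfrac1k$-Følner set in $G^\beta_x$. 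The genuine obstacle is that a $d_u$-small perturbation cannot relocate the images $\alpha_0(s_i)y$ for most $y$, so the confinement of the higher generators to blocks cannot be imposed directly; one has to absorb the required rerouting into the small exceptional set, and to see this is possible one invokes the Birkhoff theorem for the ergodic transformation $T$ — along $\mu$-a.e.\ $T$-line every positive-measure ``gluing'' region is visited with its expected frequency, so there is enough room in the exceptional set to reattach the pieces of the orbit consistently while staying inside $[E]$. Establishing this rerouting, and controlling the proportion of $x$ for which the block structure (hence the Følner set) survives, is the technical core; I expect it, rather than the bookkeeping in the first two paragraphs, to be where essentially all the work lies.
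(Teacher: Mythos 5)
Your reduction to the sets $U_k$ and the openness/$G_\delta$ bookkeeping in the first two paragraphs is fine and matches the paper, which gets the $G_\delta$ part from Lemma \ref{lem:Gd} together with the openness, in the space of Schreier graphs, of the condition ``contains a $1/k$-F{\o}lner set''. The gap is in the density argument, and it is exactly at the point you flag yourself. Confining the edges of $s_2,\dots,s_r$ to the Rokhlin blocks along the $\beta(s_1)$-line is not something a $d_u$-small perturbation can do: if $d_u(\alpha_0,\beta)<2\delta$ then $\beta(s_i)y=\alpha_0(s_i)y$ off a set of measure $O(\delta)$, and for a typical $y$ the point $\alpha_0(s_i)y$ lies in a different tower column (indeed in a different $s_1$-orbit), so a union of consecutive blocks of total length $L$ has, generically, on the order of $(r-1)L$ escaping edges --- its boundary is proportional to its size, no matter how the blocks are cut. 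Making those edges return to the same block would require redefining $\beta(s_i)$ at a $1-O(1/\ell)$ fraction of points, i.e.\ a $d_u$-change of order $1$. The appeal to Birkhoff does not repair this: equidistribution of visits to a gluing region controls how often an orbit meets a prescribed set, but it gives no mechanism for rerouting the images $\alpha_0(s_i)y$ at points where you are not allowed to change $\alpha_0(s_i)$. So the ``technical core'' you defer is not a technicality; as set up, the construction cannot produce F{\o}lner sets occupying a definite proportion of the space.

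The missing idea (and the paper's route, following Kaimanovich and \cite[Theorem 9.7]{Kechris_Miller}) is that the F{\o}lner sets need not take up any fixed proportion of $X$: it suffices that arbitrarily good finite F{\o}lner sets appear somewhere in almost every Schreier graph, and these can be planted inside a set $A$ of measure $<\epsilon/2r$. One partitions $A=\bigsqcup_n A_n$, chooses inside each $A_n$ a finite sub-equivalence relation $F_n$ with classes of size $n$ and a transversal $T_n$, redefines one generator (say $s_r$) on $A$ so that it generates $F_n$ on $A_n$ (Lemma \ref{lem:rep} keeps this $d_u$-small), and replaces each remaining generator by its first-return map to $X\setminus(A\setminus T)$, again a perturbation of size $\le\epsilon/r$. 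Then every $F_n$-class is a set of $n$ vertices in the Schreier graph whose only outgoing edges emanate from its single transversal point, hence a $2r/n$-F{\o}lner set; and ergodicity of $\beta(s_1)$ (the induced map of the ergodic $\alpha(s_1)$) guarantees that almost every orbit meets $T_n$ for infinitely many $n$, so almost every Schreier graph contains such classes for arbitrarily large $n$. This decoupling of ``small in measure'' from ``large and almost boundaryless in the graph'' is what your block scheme lacks and what makes the density step work.
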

\begin{remark} \label{rem:Kaimanovich}
If $E = E_0$ is the hyperfinite equivalence relation then it was proved by Vadim Kaimanovich in \cite{Kaimanovich:amen_hf} that {\it{every}} $\alpha \in \Hom(\F_r,[E_0])$ gives rise to a co-amenable IRS $\alpha(\F_r)_x \leftIRS \F_r$.
\end{remark}
An additional indication for the largeness of a subgroup $\Delta < \Gamma$ is the extent of transitivity of the corresponding coset action $\Gamma \curvearrowright \Gamma/\Delta$. We will go directly to the highest possible transitivity level. 
 \begin{definition}
A group action on a set $\Gamma \curvearrowright \Omega$ is called {\it{highly transitive}} if it is transitive on ordered $k$-tuples of distinct points for every $k \in \N$. We say that a subgroup $\Delta < \Gamma$ is {\it{co highly-transitive}} or {\it{co-HT}} for short, if the permutation action $\Gamma \curvearrowright \Gamma/\Delta$ is highly transitive. 
\end{definition}
\begin{remark} \label{rem:ht_dense}
Equivalently an action $\Gamma \curvearrowright \Omega$ is highly transitive if the corresponding permutation representation $\Gamma \arrow \Sym(\Omega)$ has a dense image. Here $\Sym(\Omega)$ is the full symmetric group $\Sym(\Omega)$ and it is taken with the (Polish) topology of pointwise convergence. 
\end{remark}
\begin{definition}
The {\it{core}} of a subgroup $\Delta < \Gamma$ is $\Core_{\Gamma}(\Delta) = \cap_{\gamma \in \Gamma} \gamma \Delta \gamma^{-1}$ - the largest normal subgroup that it contains. If $\Core_{\Gamma}(\Delta) = \trivgp$ we say that $\Delta$ is {\it{core free}}. $\Delta$ is core free if and only if the coset action $\Gamma \curvearrowright \Gamma/\Delta$ is faithful. 
\end{definition}
A group $\Gamma$ is called {\it{highly transitive}} if it admits a faithful, highly transitive action on a set; or in other words if it contains a core free co-HT subgroup. Some countable groups are explicitly given as highly transitive permutation groups. The outstanding example is of course $\Sym^{f}(\Omega)$, the group of finitely supported permutations, and all subgroups of the full symmetric group that contain it. The first examples of highly transitive groups, that were not implicitly given as such, were  constructed by McDonough \cite{MD_free_HT} for non-abelian free groups. Many examples followed \cite{Kit:ht,MS:ht,GG:ht,HO:ht} to note a few. The last two papers seem to indicate that the family of highly transitive groups is much richer than was originally expected. In the last paper for example Hull and Osin show that every acylindrically hyperbolic group with no non-trivial finite normal subgroups is highly transitive!

\begin{definition}
A subgroup $\Delta < \Gamma$ is called {\it{profinitely dense}} (resp. {\it{pro-dense}}) if $\Delta N = \Gamma$ for every finite index (resp. for every non-trivial) normal subgroup $N \lhd \Gamma$.
\end{definition}
A group which is co-HT automatically admits a few weaker properties which are of great interest. It is maximal of infinite index and profinitely dense. If it is core free then it is also pro-dense. In fact, it is well known that the following implications hold for a subgroup $\Delta < \Gamma$.
\begin{eqnarray*}
 {\text{co-HT}} & \Rightarrow {\text{Maximal of infinite index}} & \Rightarrow {\text{profinitely dense}}\\
{\text{co-HT and core free}} & \Rightarrow {\text{Maximal and core free}} & \Rightarrow {\text{pro dense}}
\end{eqnarray*}

\noindent For the core of a generic IRS we have the following:
\begin{theorem} \label{thm:core}
Let $E$ be an ergodic measure preserving equivalence relation with countable equivalence classes on a standard Borel probability space $(X,\Bc,\mu)$. Let $\alpha$ be a generic element  in the lean aperiodic model $\HEr$. Then the associated IRS $\alpha(\F_r)_x \leftIRS \F_r$ is core free almost surely.
\end{theorem}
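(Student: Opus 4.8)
The plan is to show that, after restricting to the residual set where $\alpha(s_1)$ is ergodic, the theorem is equivalent to the statement that a generic $\alpha \in \HEr$ is \emph{injective} as a homomorphism $\F_r \to [E]$, and then to establish the latter by a perturbation (``marker lemma'') argument.

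First I would carry out the reduction. By Remark~\ref{rem:ap->erg} it suffices to prove the conclusion for a generic $\alpha$ in $\HErE$, so assume $S = \alpha(s_1)$ is ergodic; then the action $\alpha\colon \F_r \curvearrowright (X,\Bc,\mu)$ is ergodic as well. For a reduced word $w \in \F_r$ one has $w \in \Core_{\F_r}(\alpha(\F_r)_x)$ if and only if $\alpha(g^{-1}wg)x = x$ for every $g \in \F_r$, that is, if and only if $\alpha(w)$ fixes the whole orbit $\F_r\cdot x$ pointwise; hence
\[
\{\, x : w \in \Core_{\F_r}(\alpha(\F_r)_x)\,\} \;=\; \bigcap_{g \in \F_r} \Fix\big(\alpha(g^{-1}wg)\big),
\]
which is an $\alpha(\F_r)$-invariant Borel set and so is null or co-null. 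If it is co-null then each $\Fix(\alpha(g^{-1}wg))$ is co-null, whence $\alpha(g^{-1}wg) = \mathrm{id}$ for all $g$ and in particular $w \in \ker\alpha$; conversely if $w \in \ker\alpha$ the set is all of $X$. Taking the union over the countably many $w \notin \ker\alpha$ shows that $\Core_{\F_r}(\alpha(\F_r)_x) = \ker\alpha$ for $\mu$-a.e.\ $x$. Thus the generic IRS $\alpha(\F_r)_x \leftIRS \F_r$ is core free almost surely precisely when $\alpha$ is injective.

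It remains to prove that a generic $\alpha \in \HEr$ is injective. Since $\F_r$ is countable it is enough to show that for each reduced word $e \neq w \in \F_r$ the set $O_w := \{\alpha \in \HEr : \alpha(w) \neq \mathrm{id}\}$ is dense and open in $\HEr$; then $\bigcap_{w \neq e} O_w$, intersected with the residual set $\HErE$, is residual and consists of injective $\alpha$ with $\alpha(s_1)$ ergodic, and the first two paragraphs finish the proof. Openness is immediate, since $\alpha \mapsto \alpha(w)$ is $d_u$-continuous (translations are $d_u$-isometries) and $\{\mathrm{id}\}$ is closed. For density: if $w$ is a nonzero power of $s_1$ then $\alpha(w) = \alpha(s_1)^k \neq \mathrm{id}$ for every $\alpha \in \HEr$ because $\alpha(s_1)$ is aperiodic, so $O_w = \HEr$. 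Otherwise $w$ involves some generator $s_j$ with $j \ge 2$, and given $\alpha_0 \in \HEr$ and $\delta > 0$ I would produce $\alpha \in \HEr$ with $d_u(\alpha,\alpha_0) < \delta$ and $\alpha(w) \neq \mathrm{id}$ by altering only $\alpha_0(s_j)$ on a set of measure $< \delta$, keeping $\alpha(s_1) = \alpha_0(s_1)$ aperiodic.

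The construction of this perturbation is the main obstacle. Writing $w = c_\ell \cdots c_1$ in reduced form and fixing one occurrence $c_m = s_j^{\pm 1}$, one uses a Rokhlin/marker-type lemma for the aperiodic ergodic relation $E$ to choose a Borel set $Z$ of arbitrarily small positive measure whose partial $w$-trajectory $Z, \alpha_0(c_1)Z, \dots, \alpha_0(c_\ell\cdots c_1)Z$ is ``embedded'', and then replaces $\alpha_0(s_j)$ by a transformation differing from it only on a small set near $\alpha_0(c_{m-1}\cdots c_1)Z$ that re-routes the $m$-th step, so that the modified word $\alpha(w)$ moves $Z$ off itself while every other step of the word still sees the unaltered generator on points of $Z$. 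Two points require care: the generator $s_j$ may occur several times in $w$, which is exactly why the perturbation must be supported on a sufficiently small set and why one needs embeddedness of the partial trajectory; and the partial trajectory of $Z$ can be genuinely degenerate — precisely when $\Fix(\alpha_0(c_k\cdots c_{i+1}))$ has positive measure for some proper subword — in which case one first passes to such a shorter subword (or first perturbs $\alpha_0$ to remove the degeneracy) and argues by induction on word length, the base case $w \in \langle s_1 \rangle$ being handled by aperiodicity as above. Granting the density of every $O_w$, the reduction of the second paragraph completes the proof.
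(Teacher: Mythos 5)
Your reduction in the first two paragraphs is correct and is a genuine (mild) repackaging of the problem: since $\alpha(s_1)$ is generically ergodic, for such $\alpha$ one has $\Core_{\F_r}(\alpha(\F_r)_x)=\ker\alpha$ almost surely, so almost-sure core-freeness is equivalent to injectivity of $\alpha$, and the set $O_w=\{\alpha \ : \ \alpha(w)\neq \mathrm{id}\}$ is indeed open by Lemma \ref{lem:cont}. This is a bit slicker on the topological side than the paper, which proves the stronger pointwise statement (``$\alpha(g)$ acts non-trivially on almost every orbit'') and gets the $G_\delta$ property from the Schreier-graph Lemma \ref{lem:Gd}; under ergodicity of $\alpha(s_1)$ the two formulations coincide, so your target is the right one.

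The gap is in the density of $O_w$, which you yourself flag as ``the main obstacle'' but do not actually overcome. If a positive-measure set $Z$ with embedded partial trajectory $Z,\alpha_0(c_1)Z,\dots,\alpha_0(c_\ell\cdots c_1)Z$ exists, then already $\alpha_0(w)Z\cap Z=\emptyset$ and no perturbation is needed; so the only case that matters is the degenerate one, typically $\alpha_0(w)=\mathrm{id}$ or some partial product with a positive-measure fixed set. Exactly there your proposal retreats to ``pass to a shorter subword (or first perturb $\alpha_0$ to remove the degeneracy) and induct on word length,'' and this is not an argument: knowing that $O_v$ is dense for a proper subword $v$ gives no control of $\alpha(w)$ (a perturbation making $\alpha(v)\neq\mathrm{id}$ can perfectly well leave $\alpha(w)=\mathrm{id}$, and non-identity elements can still fix sets of positive measure, so the degeneracy does not go away), while ``perturb to remove the degeneracy'' is precisely the statement to be proved. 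Moreover, rerouting a single occurrence of $s_j$ leaves the prefix and suffix of the word acting by the unmodified, uncontrolled $\alpha_0$, and when later occurrences of $s_j^{\pm1}$ along the rerouted trajectory meet the modified region the word no longer does what you intend; making this consistent is the whole difficulty. The paper's proof resolves exactly this point with an explicit mechanism you would need to reproduce: cyclically reduce $g=w_s\cdots w_1$ (the case $g\in\langle s_1\rangle$ being handled by aperiodicity, as in your sketch), take a Rokhlin tower $O,\sigma O,\dots,\sigma^{s}O$ for the aperiodic $\sigma=\alpha(s_1)$, choose a permutation $\tau\in S_{s+1}$ compatible with the occurrences of $s_1^{\pm1}$ in $g$, and use Lemma \ref{lem:rep} to prescribe \emph{every} non-$s_1$ letter so that the $i$-th step carries the tower level $\sigma^{\tau(i-1)}(O)$ onto $\sigma^{\tau(i)}(O)$; reducedness of $g$ guarantees these prescriptions are on pairwise distinct levels and hence consistent, no modification of $s_1$ is needed, and the resulting $\beta$ satisfies $\beta(g)\bigl(\sigma^{\tau(0)}(O)\bigr)=\sigma^{\tau(s)}(O)$, a disjoint set, so $\beta(g)\neq\mathrm{id}$ (in fact $\beta(g)$ moves a set meeting almost every orbit). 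Without this (or an equivalent) construction, your density claim, and hence the proof, is incomplete.
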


We do not know if a generic IRS in the lean aperiodic model is co-HT for general equivalence relations. However for the hyperfinite equivalence relation we can prove that this is the case. 
\begin{theorem} \label{thm:coHT}
Let $E_0$ be the ergodic aperiodic hyperfinite equivalence relation and $\HEr$ the corresponding lean aperiodic model. For a generic $\alpha \in \HEr$ the associated IRS $\alpha(\F_r)_x \leftIRS \F_r$ is co-HT almost surely.
\end{theorem}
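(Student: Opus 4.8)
The plan is to recast co-HT of the stabilizer IRS as a Baire-generic property of $\alpha$ and to establish genericity through one perturbation lemma; we assume $r\ge 2$, since $\F_1=\Z$ admits no highly transitive action on more than two points. \emph{Reformulation.} By orbit--stabilizer the coset action $\F_r\curvearrowright\F_r/\alpha(\F_r)_x$ is isomorphic to $\F_r\curvearrowright\alpha(\F_r)\cdot x$, so the IRS is co-HT almost surely iff for a.e.\ $x$ the $\F_r$-action on the orbit $\alpha(\F_r)\cdot x$ has dense image in its symmetric group. Fix an exhaustion $E_0=\bigcup_n F_n$ with $F_n$ periodic of constant class size $m_n$, together with a Borel labeling identifying each $F_n$-class with $\{1,\dots,m_n\}$ (in the standard $\{0,1\}^{\N}$-model, ``agreement past coordinate $n$'', with the first $n$ coordinates as label). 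I claim the IRS is co-HT a.s.\ as soon as $\alpha$ satisfies $(\star)$: \emph{for every $n$ and every permutation $\sigma$ of $\{1,\dots,m_n\}$, for a.e.\ $x$ there is $g\in\F_r$ with $\alpha(g)[x]_{F_n}=[x]_{F_n}$ inducing $\sigma$ there}. Indeed, feeding transpositions into $(\star)$ already forces $[x]_{F_n}\subseteq\alpha(\F_r)x$ for a.e.\ $x$ and all $n$, hence $\alpha(\F_r)x=[x]_{E_0}$ a.e.; and then, given finitely many distinct source and target points in $[x]_{E_0}$, one picks $n$ with all of them in $[x]_{F_n}$, extends the assignment to a permutation of $[x]_{F_n}$, and invokes $(\star)$. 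So it suffices to prove that $(\star)$ holds for a residual set of $\alpha\in\HEr$.

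\emph{$(\star)$ is $G_\delta$.} Put $X^{\alpha}_{n,\sigma}=\{x:\exists\,g\in\F_r,\ \alpha(g)[x]_{F_n}=[x]_{F_n}\text{ inducing }\sigma\}$, a Borel set; then $(\star)$ equals $\bigcap_{n,\sigma}\bigcap_{k\ge 1}\{\alpha:\mu(X^{\alpha}_{n,\sigma})>1-1/k\}$. Each set here is open: $X^{\alpha}_{n,\sigma}$ is the increasing union, over finite $W\subseteq\F_r$, of the sets obtained by restricting the witness $g$ to $W$, and the measure of each of these depends uniformly continuously on $\alpha$, because $\alpha_j\to\alpha$ in $\HEr$ forces $\alpha_j(g)\to\alpha(g)$ uniformly for every fixed $g$ (the uniform metric is subadditive under composition), while passing to the $F_n$-saturation of a set multiplies its measure by at most $m_n$ since $F_n$ is measure preserving with classes of size $m_n$.

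\emph{Density.} By Baire it is enough that for every $n$ and $\eta>0$ the (open) set $\{\alpha\in\HEr:\mu(X^{\alpha}_{n,\sigma})>1-\eta\text{ for all }\sigma\}$ is dense. Density need only be checked on a dense subset, so — using that one may perturb any element of $\HEr$ so that $\alpha(\F_r)$ spans $E_0$ while $\alpha(s_1)$ stays aperiodic (Gaboriau--Levitt theory of cost, $\Cost(E_0)=1<r$), and then by Remark~\ref{rem:ap->erg} assume $\alpha_0(s_1)$ ergodic — I start from $\alpha_0\in\HEr$ whose $\F_r$-action is ergodic and spans $E_0$. For each pair $i\ne j$ build a \emph{gadget}: two tiny disjoint sets $V^{ij}_1,V^{ij}_2$ joined by a fixed partial isomorphism inside $E_0$-classes, with all such sets disjoint and of total measure $<\epsilon$; modify only $\alpha_0(s_2)$, replacing it by $\beta$ which agrees with $\alpha_0(s_2)$ off a small neighbourhood of $V:=\bigcup V^{ij}_\ell$ and restricts on each $V^{ij}_1\cup V^{ij}_2$ to the corresponding swap (the routine $O(\mu(V))$-rerouting needed to make $\beta$ lie in $[E_0]$ is omitted). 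Then $\alpha:=(\alpha_0(s_1),\beta,\alpha_0(s_3),\dots)\in\HEr$ with $d_u(\alpha,\alpha_0)<\epsilon$. For a word $g=h^{-1}s_2h$ one has $\alpha(g)=\alpha(h)^{-1}\beta\,\alpha(h)$; so if $\alpha_0(h)$ carries $x_i$ into $V^{ij}_1$, carries $x_j$ to its swap-partner in $V^{ij}_2$, carries every other point of $[x]_{F_n}$ off $V$, and if the $\alpha_0(h)$-trajectories of these points meet $V$ only at the final step, then $\alpha(g)$ fixes $[x]_{F_n}$ setwise and induces the transposition $(x_i\,x_j)$ there. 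Composing such elements along a transposition factorisation realises any $\sigma$ on $[x]_{F_n}$. Thus one only has to produce, for at least $1-\eta$ of the $F_n$-classes, conjugating words $h$ as above (one per pair, drawn from a fixed finite list), the ``clean trajectory'' and ``other points off $V$'' clauses costing only an error absorbed by shrinking $V$.

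\emph{The main obstacle} is that last assertion: for a large-measure set of labeled $F_n$-classes one must, with a \emph{single} group element, send a prescribed point into $V^{ij}_1$ while simultaneously sending a second prescribed point to its swap-partner in $V^{ij}_2$. This is a recurrence statement for the diagonal $\alpha_0(\F_r)$-action on pairs of $E_0$-equivalent points — the orbit of a.e.\ such pair should meet the positive-measure graph of the partial isomorphism defining the gadget — and it has to be extracted from spanning together with ergodicity, e.g.\ by arranging, within a dense subset of $\HEr$, that the $\alpha_0(\F_r)$-action on the $\sigma$-finite space of pairs of $E_0$-equivalent points is ergodic. Getting this recurrence right, along with the bookkeeping of the finite family of conjugating words and the size of $V$, is where the real work lies; the reduction to $(\star)$, the $G_\delta$ computation, and the generic spanning are soft. (The case $r=\infty$ is similar, with spanning supplied by Theorem~\ref{thm:periodic_hf}\eqref{itm:inf_gen}.)
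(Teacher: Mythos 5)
Your reduction to the property $(\star)$ and the $G_\delta$ computation are sound (and parallel the paper, which reduces to realizing each fixed permutation $\tau\in S_m$ at almost every point and gets the $G_\delta$ part from Lemma~\ref{lem:Gd}). But the density step is where the theorem actually lives, and there you have a genuine, self-acknowledged gap: your gadget construction requires, for most labeled $F_n$-classes, a \emph{single} element $\alpha_0(h)$ that simultaneously carries $x_i$ into $V^{ij}_1$, carries $x_j$ onto its prescribed swap-partner in $V^{ij}_2$, and keeps the rest of the class (and the intermediate trajectory) off $V$. This is a recurrence statement for the diagonal action on pairs of $E_0$-equivalent points, targeted at the graph of the gadget's partial isomorphism, and it does not follow from ergodicity of $\alpha_0(s_1)$ on $X$ plus spanning: ergodicity on $X$ says nothing about the orbit of a \emph{pair} $(x_i,x_j)$ hitting a prescribed positive-measure subset of $E_0\subset X\times X$ (which is null in $X\times X$ and carries only a $\sigma$-finite measure), and spanning only guarantees that the pair can be moved somewhere in its $E_0\times E_0$-class, not into the gadget. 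Arranging "ergodicity of the diagonal action on pairs" within a dense set of $\HEr$, as you suggest, is not easier than the theorem itself. (Secondary point: your claim that any $\alpha\in\HEr$ can be perturbed to a spanning one is not a consequence of cost theory alone; it is essentially Le-Ma\^itre's theorem, and in any case it would not resolve the pair-recurrence issue.)

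The paper's proof avoids this obstacle entirely by exploiting the hyperfinite structure in a way your labeling by $F_n$-classes does not: since $E_0$ is generated by a single ergodic $\sigma\in[E_0]$, and since by \cite[Theorem 3.4]{Kechris:Global_aspects} the conjugacy class of any aperiodic element is uniformly dense among aperiodic elements, one may (after an $\epsilon$-perturbation and a conjugation) assume $\alpha(s_1)=\sigma$ and index each class by powers of $\sigma$. One then programs $s_2$ on a Rokhlin-type tower $O,\sigma O,\dots,\sigma^{m-1}O$ to realize the given $\tau$ there (Lemma~\ref{lem:rep}), and the conjugates $\beta(s_1^{-n(x)}s_2s_1^{n(x)})$, with $n(x)$ the first-return time of $x$ to $O$ under $\sigma$, realize $\tau$ at almost every $x$: conjugation by a power of $s_1$ transports the whole window $(x,\sigma x,\dots,\sigma^{m-1}x)$ rigidly into the tower at once, so the multi-point transport you are missing collapses to the recurrence of a single point, which is exactly ergodicity of $\sigma$. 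To repair your argument you would need this idea (or an equivalent substitute for the pair recurrence); as written, the proof is incomplete at its central step.
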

\begin{remark} \label{rem:LM}
A much more general theorem was proved by Le-Ma\^itre \cite[Theorem 5.1]{LM:lean} who obtains the same results the much wider class of equivalence relations with $\Cost(E)=1$. Even though Le-Ma\^itre's theorem is more general we decided to leave our own statement because the proof in this specific case is very easy and might be of interest. 
\end{remark}

For general aperiodic Ergodic, Borel equivalence relation $E$ with countable classes there exist $\alpha\in \Hom(\F_r,[E])$ such that the associated IRS is core free and co-HT almost everywhere. To see this note that every such $E$ contains an ergodic aperiodic hyperfinite subequivalence relation $E_0$ ( \cite[Lemma 23.2]{Kechris_Miller}). From Theorem \ref{thm:coHT} above, there is an abundance of elements in $\Hom(\F_r,[E_0])$ such that the associated IRS is highly transitive almost everywhere, but of course each such element is also an element of $\Hom(\F_r,[E])$.

It does seem more interesting to construct actions that span the whole equivalence relation, whose associated IRS are co-HT almost surely This can be done, albeit not in a generic construction. Our observation here is the following:
\begin{proposition} \label{prop:dense->ht}
Let $\Delta<[E]$ be a dense subgroup of the full group of a Borel equivalence relation $E$ on $(X,\mu)$. Then $\Delta$ acts highly transitively on almost every equivalence class of $E$.
\end{proposition}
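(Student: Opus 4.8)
The plan is to reduce the statement, via a Borel--Cantelli argument, to the solvability of finitely many equations at a single point, and to secure that solvability by approximating a suitably disjointified partial isomorphism by elements of $\Delta$. First, since $[E]$ is separable, $\Delta$ contains a countable dense subset, and the subgroup $\Delta_0\le\Delta$ it generates is a countable dense subgroup; as the transitivity of an action only improves when the acting group is enlarged, it suffices to prove the proposition for $\Delta_0$, so I assume $\Delta$ is countable. Fix a countable group $G$ of $\mu$-preserving Borel automorphisms of $X$ with $E=\{(x,gx):g\in G,\ x\in X\}$ (Feldman--Moore). For each $k\ge 1$, "$k$-transitive on $[x]_E$'' means transitive on the ordered $k$-tuples of distinct points of $[x]_E$, and "highly transitive'' is the conjunction of these over all $k$; since a countable intersection of conull sets is conull, it is enough to fix $k$ and show that for a.e. $x$ the $\Delta$-action on $[x]_E$ is $k$-transitive.

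\textbf{From orbits to a single point.} If the $\Delta$-action on $[x]_E$ fails to be $k$-transitive, this is witnessed by distinct points $y_1,\dots,y_k$ and $z_1,\dots,z_k$ of $[x]_E$ with no $\delta\in\Delta$ satisfying $\delta y_i=z_i$ for all $i$. Putting $y:=y_1$ and choosing $g_{a_i},g_{b_i}\in G$ with $y_i=g_{a_i}y$, $z_i=g_{b_i}y$, the point $y$ lies in the set
\[
N_{\bar a,\bar b}:=\{y\in X:\ (g_{a_i}y)_i\text{ distinct},\ (g_{b_i}y)_i\text{ distinct},\ \forall\delta\in\Delta\ \exists i\ \delta g_{a_i}y\ne g_{b_i}y\},
\]
which is Borel because $\Delta$ is countable. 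Hence the set of $x$ for which $[x]_E$ fails $k$-transitivity is contained in $\bigcup_{\bar a,\bar b}\bigcup_{g\in G}g(N_{\bar a,\bar b})$, a countable union; since $G$ preserves $\mu$, it suffices to prove $\mu(N_{\bar a,\bar b})=0$ for each fixed pair $(\bar a,\bar b)$.

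\textbf{Disjointification and approximate extension.} Fix $(\bar a,\bar b)$. On the Borel set $Y$ where $(g_{a_i}y)_i$ and $(g_{b_i}y)_i$ are tuples of distinct points (so $N_{\bar a,\bar b}\subseteq Y$), consider the Borel graph joining $y$ to $hy$ for $h$ in the finite set $\{g_{a_i}^{-1}g_{a_j},\ g_{b_i}^{-1}g_{b_j}:i\ne j\}$. It has bounded degree, hence admits a finite Borel coloring, which splits $Y$ into finitely many Borel pieces $Y_1,\dots,Y_L$ on each of which the translates $g_{a_1}(Y_l),\dots,g_{a_k}(Y_l)$ are pairwise disjoint, and likewise $g_{b_1}(Y_l),\dots,g_{b_k}(Y_l)$. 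The rule $g_{a_i}y\mapsto g_{b_i}y$ is then a well-defined, injective, measure-preserving partial isomorphism $\Psi_l\in[[E]]$ with domain $D_l=\bigsqcup_i g_{a_i}(Y_l)$ and range $R_l=\bigsqcup_i g_{b_i}(Y_l)$, satisfying $|C\cap D_l|=k\,|C\cap Y_l|=|C\cap R_l|$ for every $E$-class $C$. Using a marker set of measure $<\varepsilon$ on the aperiodic part of $E$ (the periodic part needs no adjustment, the cardinality identity already matching the complements class by class), one shrinks $D_l$ by a set of measure $<\varepsilon$ and extends the restriction to some $\phi\in[E]$. Density of $\Delta$ supplies $\delta_n\in\Delta$ with $\mu(\{\delta_n\ne\phi\})<2^{-n}$; by Borel--Cantelli, for a.e. $w$ one has $\delta_n w=\phi w$ for all large $n$. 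Applying this at the finitely many points $g_{a_1}y,\dots,g_{a_k}y$ and taking the largest of the resulting indices shows that, off a subset of $Y_l$ of measure $O(k\varepsilon)$, some single $\delta\in\Delta$ satisfies $\delta g_{a_i}y=g_{b_i}y$ for all $i$. Letting $\varepsilon\to 0$ gives $\mu(N_{\bar a,\bar b}\cap Y_l)=0$, and summing over $l$ yields $\mu(N_{\bar a,\bar b})=0$, completing the proof.

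\textbf{Main obstacle.} The delicate point is this last step: producing the Borel partition on which the local rule $g_{a_i}y\mapsto g_{b_i}y$ is single-valued (the Borel-coloring step), and then extending, approximately, the resulting partial isomorphism to a genuine element of $[E]$. Partial isomorphisms of a probability-measure-preserving countable equivalence relation need not extend globally, so one must pay an $\varepsilon$ and invoke a marker lemma together with the elementary structure theory of such relations. By contrast, the reductions of the first two paragraphs and the final Borel--Cantelli manipulation are routine.
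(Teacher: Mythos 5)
Your overall architecture is the same as the paper's: Feldman--Moore to get a countable group $G$ generating $E$, reduction of high transitivity on a.e.\ class to the nullity of countably many Borel sets indexed by tuples from $G$, a Borel disjointification so that the tuple-moving rule becomes a well-defined partial isomorphism, and then approximation by elements of $\Delta$ using density plus a union-bound/Borel--Cantelli estimate. Two points of divergence are worth noting. First, where you invoke a finite Borel coloring of a bounded-degree graph (Kechris--Solecki--Todor\v{c}evi\'c), the paper uses the elementary Lemma \ref{lem:ref2}, which produces a countable partition by a separating-family argument; both work, yours at the cost of citing a heavier theorem. Second, and more importantly, you realize arbitrary tuple-to-tuple maps $g_{a_i}y\mapsto g_{b_i}y$, whereas the paper only realizes permutations $\tau$ of a single tuple $(g_0x,\dots,g_{m-1}x)$ (which suffices: to send one tuple to another, extend the induced injection to a permutation of the finite combined tuple). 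This is not a cosmetic difference: for a permutation the partial map sends $\bigsqcup_i g_i(A_k)$ onto itself, so its domain equals its range and it extends to an element of $[E]$ trivially, by the identity off that set. Your choice forces you to extend a partial isomorphism whose domain $D_l$ and range $R_l$ are different sets, and that is exactly where your write-up stops being a proof.

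Concretely, the sentence ``using a marker set of measure $<\varepsilon$ \dots one shrinks $D_l$ by a set of measure $<\varepsilon$ and extends the restriction to some $\phi\in[E]$'' asserts the crux rather than proving it. The cardinality identity $|C\cap D_l|=|C\cap R_l|$ does \emph{not} by itself allow a class-by-class extension on the aperiodic part: if $C$ is infinite and both intersections are infinite, the complements $C\setminus D_l$ and $C\setminus R_l$ may have different finite cardinalities, so an exact extension can genuinely fail, and a marker set by itself does not manufacture the required bijection. What actually saves the argument is the measure-theoretic equidecomposability fact for aperiodic p.m.p.\ relations: since $\Psi_l\in[[E]]$, the sets $X\setminus D_l$ and $X\setminus R_l$ have equal measure on every $E$-invariant set, hence (restricted to the aperiodic part) they are equidecomposable mod null via an element of $[[E]]$ (this is standard, see Kechris--Miller, and in fact no $\varepsilon$-shrinking is needed there), while on the periodic part the class-by-class counting argument you give is correct. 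So your proof is repairable either by invoking that fact explicitly in place of the marker sketch, or, more cheaply, by adopting the paper's permutation trick, which eliminates the extension problem altogether; as it stands, the extension step is a gap, and it is the only one --- the reductions, the disjointification, and the approximation estimate are fine.
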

We mentioned in Remark \ref{rem:ht_dense}, that a permutation group $\Delta < \Sym(\Omega)$ is highly transitive if and only if is dense in $\Sym(\Omega)$. The analogy with Proposition \ref{prop:dense->ht} suggests the question weather the converse to Proposition \ref{prop:dense->ht} is also true? 

One obvious corollary of Proposition \ref{prop:dense->ht} is worth pointing out. If $\Delta$  is not highly transitive then it cannot admit a dense embedding into $[E]$. For example, by \cite{HO:ht} any group that satisfies a non-trivial mixed identity, but is not an intermediate subgroup between the full symmetric group, and the finitely supported permutation group of a countable set $\Sym^{f}(\Omega) < \Delta < \Sym(\Omega)$ is not highly transitive. Consequently such a group cannot be densely embedded into a full group of an aperiodic equivalence relation $[E]$. Similarly a group that does not admit a highly transitive permutation representation on a countable set will not admit a homomorphism into $[E]$ with a dense image. 

Any $\alpha \in \Hom(\F_r,E)$ with $\alpha(\F_r)$ dense in the full group $[E]$ would give rise to a co-HT IRS $\alpha(\F_r)_x$. The existence of dense subgroups, and the exact number of generators needed in order to generate them is a delicate question. Luckily this was fully solved in two very elegant papers by Fran\c{c}ois Le-Ma\^itre \cite{Le_Maitre,LM:lean}. In the first paper he proves that if $r > \Cost(E)$, for an ergodic equivalence relation $E$, then there exists a representation $\alpha \in \Hom(\F_r, [E])$ with a dense image. In the second paper he establishes the generalization of Theorem \ref{thm:coHT} to cost one equivalence relations. As a corollary we deduce:
\begin{theorem} \label{thm:LM}
Let $E$ be an ergodic measure preserving equivalence relation with countable equivalence classes on a standard Borel probability space $(X,\Bc,\mu)$. Assume that $\Cost(E) < r$ then:
\begin{itemize}
\item There exists an action $\alpha \in \Hom(\F_r,[E])$ that generates $[E]$, and whose associated IRS $\alpha(\F_r)_x \leftIRS \F_r$ is almost surely co highly transitive.
\item If moreover $\Cost(E)=1$ then the same holds for a generic $\alpha \in \HEr$. 
\end{itemize}
\end{theorem}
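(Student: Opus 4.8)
The plan is to derive both clauses by feeding the two theorems of Le-Ma\^itre quoted before the statement into Proposition~\ref{prop:dense->ht}; the only ingredient that is not a literal quotation is the implication ``$\alpha(\F_r)$ dense in $[E]$'' $\Longrightarrow$ ``$\alpha(\F_r)_x$ is co-HT for $\mu$-a.e.\ $x$'', so I would first dispose of this. Assume $\alpha\in\Hom(\F_r,[E])$ has dense image. By Proposition~\ref{prop:dense->ht} there is a conull $X_0\subseteq X$ on which $\alpha(\F_r)$ acts highly transitively on every $E$-class; in particular it acts transitively on $[x]_E$ for $x\in X_0$, so the orbit map $w\mapsto\alpha(w)x$ descends to an $\F_r$-equivariant bijection $\F_r/\alpha(\F_r)_x\to[x]_E$. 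Hence the coset action $\F_r\curvearrowright\F_r/\alpha(\F_r)_x$ is highly transitive, i.e.\ $\alpha(\F_r)_x$ is co-HT, for every $x\in X_0$; equivalently the IRS $\alpha(\F_r)_x\leftIRS\F_r$ is co-HT almost surely. Since ``$\alpha$ generates $[E]$'' means exactly that $\alpha$ has dense image, this says precisely that any action generating $[E]$ has an almost surely co-HT associated IRS.

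Granting this, the first clause is immediate: as $E$ is ergodic and $\Cost(E)<r$, the theorem of Le-Ma\^itre in \cite{Le_Maitre} supplies some $\alpha\in\Hom(\F_r,[E])$ with dense image, and the paragraph above makes it the desired example. For the co-highly-transitive half of the second clause, under the extra hypothesis $\Cost(E)=1$, I would quote \cite[Theorem~5.1]{LM:lean}, which extends Theorem~\ref{thm:coHT} from $E_0$ to every ergodic cost-one equivalence relation and states that a generic $\alpha\in\HEr$ already has an almost surely co-HT IRS. To see in addition that a generic $\alpha\in\HEr$ generates $[E]$, I would argue by Baire category: the set $G=\{\alpha\in\HEr:\overline{\alpha(\F_r)}=[E]\}$ is $G_\delta$, since for a fixed countable dense $\{g_n\}\subset[E]$ it equals $\bigcap_{n,k}\bigcup_{w\in\F_r}\{\alpha:d_u(\alpha(w),g_n)<1/k\}$ and each word map $\alpha\mapsto\alpha(w)$ is $d_u$-continuous; and $G$ is dense in $\HEr$ because, given $\alpha_0\in\HEr$, one may perturb the coordinates $\alpha_0(s_2),\dots,\alpha_0(s_r)$ alone --- leaving $\alpha_0(s_1)$ aperiodic, which is all that the lean model requires --- to reach a dense-image tuple, using $r-1\ge1$ and Le-Ma\^itre's dense-generation construction from \cite{Le_Maitre}. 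Intersecting the comeager set $G$ with the comeager set of \cite[Theorem~5.1]{LM:lean} produces a generic $\alpha\in\HEr$ that both generates $[E]$ and has an almost surely co-HT IRS.

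The step I expect to be the main obstacle is this last density statement inside $\HEr$: the lean aperiodic model may be meagre in $\Hom(\F_r,[E])$, so one cannot import density of generating tuples from the ambient $[E]^r$, and must instead carry out Le-Ma\^itre's dense-generation argument \emph{relative} to an arbitrary prescribed aperiodic first generator and an arbitrary uniform-metric tolerance. All the rest is the reduction above together with the two citations. (If ``the same holds'' in the statement is read as referring only to the co-HT conclusion, the Baire-category argument is not needed and the second clause is immediate from \cite[Theorem~5.1]{LM:lean}.)
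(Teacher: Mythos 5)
Your proposal follows the paper's own route exactly: the first bullet is Le-Ma\^itre's dense-image theorem from \cite{Le_Maitre} fed into Proposition \ref{prop:dense->ht} (whose bridge from high transitivity on almost every class to co-HT stabilizers you spell out and the paper leaves implicit), and the second bullet is a direct appeal to \cite[Theorem 5.1]{LM:lean}. The paper does not attempt your supplementary Baire-category argument that a generic $\alpha \in \HEr$ topologically generates $[E]$ --- its one-line proof treats the generic clause exactly as in your fallback reading, so the density-in-$\HEr$ step you flag as the main obstacle is not part of the paper's argument and is not needed to match it.
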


As we mentioned earlier Bowen uses a slightly different generic model for aperiodic actions $\Hom'(F_r,[E]) = \{\phi \in \Hom(F_r,[E]) \ | \ \phi(F_r) {\text{ is aperiodic}}\}$. This is a $G_{\delta}$ subset of $\Hom(F_r,[E])$. As a main motivation for the introduction of this generic model he proves that when $r > \Cost(E)$ then for a generic $\phi \in \Hom'(F_r,[E])$ the associated factor map 
\begin{equation} \label{eqn:factor}
(X,\Bc,\mu) \stackrel{\Phi}{\arrow} (\Sub(F_r),\Phi_*\mu), \qquad x \mapsto \phi(F_r)_x\end{equation}
becomes an isomorphism. This is Theorem \cite[Theorem 5.3]{bowen:irs_free}. It is not difficult to go over the proof of that theorem and see that a similar proof works also for a generic $\phi \in \HEr$ in the lean aperiodic model. It was suggested by the referee that this result of Bowen combines with the results of Le-Ma\^itre and ourselves to give the following:

\begin{corollary} \label{cor:conc}
Let $E$ be a Borel equivalence relation with countable equivalence classes. Assume that $\Cost(E)=1$ on . Then $E$ is orbit equivalent to an orbit equivalence relation coming from an action of $\F_2$ on $(\Sub(\F_2),\nu)$ where $\nu$ is an appropriately chosen IRS. Moreover one can assume in addition that $\nu$ is supported on core free, co-amenable and co-HT  subgroup. 
\end{corollary}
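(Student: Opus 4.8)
The plan is to run all the generic statements already in hand — Theorem~\ref{thm:co-am} (co-amenability), Theorem~\ref{thm:core} (core freeness), Theorem~\ref{thm:LM} (density in $[E]$ together with co-HT, in the cost-one case), and Bowen's theorem \cite[Theorem~5.3]{bowen:irs_free} (the factor map \eqref{eqn:factor} is an isomorphism) — simultaneously inside the \emph{single} Polish space $\HEr(2,E)$, and to extract one homomorphism witnessing all of them at once. First I would reduce to $E$ ergodic and aperiodic. Aperiodicity is forced by the conclusion: a point lying in a finite $E$-class has finite-index stabilizer under $\alpha(\F_2)$, and a finite-index subgroup of $\F_2$ is never core free, so no IRS supported on core free subgroups could occur if $E$ had finite classes. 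The general aperiodic case reduces to the ergodic one by ergodic decomposition, using Gaboriau's formula \cite{Gab:cout} that the cost of an aperiodic relation is the integral of the costs of its ergodic components (each being aperiodic, hence of cost $\ge 1$), so that $\Cost(E)=1$ forces almost every component to have cost exactly one. From now on $E$ is ergodic, aperiodic, with $\Cost(E)=1$, and $r=2$.

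Work in the lean aperiodic model $\HEr=\HEr(2,E)$. Since aperiodicity of $\alpha(s_1)$ is a $G_\delta$ condition, $\HEr$ is $G_\delta$ in the Polish space $\Hom(\F_2,[E])$, hence itself Polish and a Baire space. I would intersect four residual subsets of $\HEr$. \textbf{(i)} By Theorem~\ref{thm:LM} (second bullet), equivalently by Le-Ma\^itre \cite{LM:lean}, the set of $\alpha$ with $\alpha(\F_2)$ dense in $[E]$ and co-HT associated IRS is residual; by Proposition~\ref{prop:dense->ht} density of $\alpha(\F_2)$ makes it act highly transitively — in particular transitively — on $\mu$-almost every $E$-class, so the $\alpha(\F_2)$-orbit equivalence relation equals $E$ off a null set. \textbf{(ii)} By Theorem~\ref{thm:co-am} the set of $\alpha$ with co-amenable IRS is residual. \textbf{(iii)} By Theorem~\ref{thm:core} the set of $\alpha$ with core-free IRS is residual. \textbf{(iv)} Since $r=2>1=\Cost(E)$, Bowen's result \cite[Theorem~5.3]{bowen:irs_free} — whose proof, as the text already observes, transfers to $\HEr$ because it uses only that $\alpha(\F_2)$ has infinite orbits, guaranteed here by aperiodicity of $\alpha(s_1)$ — gives that the set of $\alpha$ for which the stabilizer map $x\mapsto\alpha(\F_2)_x$ is an isomorphism of probability spaces $(X,\Bc,\mu)\to(\Sub(\F_2),\nu_\alpha)$, $\nu_\alpha:=(\Phi_\alpha)_*\mu$, equivariant for the $\alpha(\F_2)$-action on $X$ and the conjugation action of $\F_2$ on $\Sub(\F_2)$, is residual. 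By the Baire category theorem the intersection of (i)--(iv) is residual, hence non-empty.

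Fix $\alpha$ in that intersection and put $\nu:=(\Phi_\alpha)_*\mu\in\IRS(\F_2)$. By (i)--(iii), $\nu$ is supported on subgroups of $\F_2$ that are simultaneously core free, co-amenable and co-HT. By (i), $E$ agrees mod null with the $\alpha(\F_2)$-orbit equivalence relation on $(X,\Bc,\mu)$; by (iv), $\Phi_\alpha$ is a measure-space isomorphism onto $(\Sub(\F_2),\nu)$ intertwining that relation with the orbit equivalence relation of the conjugation action $\F_2\curvearrowright(\Sub(\F_2),\nu)$. Hence $\Phi_\alpha$ exhibits $E$ as orbit equivalent to an orbit equivalence relation of an $\F_2$-action on $(\Sub(\F_2),\nu)$, with $\nu$ supported as claimed, which is the statement of the corollary.

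The main obstacle is step (iv): one must verify in detail that Bowen's proof that the factor map \eqref{eqn:factor} generically becomes an isomorphism really does go through in the lean aperiodic model $\HEr$ rather than in his model $\Hom'(\F_r,[E])$. The paper already signals this is routine — the argument exploits only that the image group has infinite orbits — so the work is careful bookkeeping, not a new idea, but it must be done. Two lesser points also need attention: in the first paragraph, that cost one is inherited by almost every ergodic component and that the ergodic-case isomorphisms can be assembled measurably into a global orbit equivalence; and that ``$\alpha(\F_2)$ generates $[E]$'' in Theorem~\ref{thm:LM} is used in the sense of topological density, so that Proposition~\ref{prop:dense->ht} applies and delivers $E$ itself as the orbit equivalence relation.
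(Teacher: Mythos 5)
Your proposal is correct and follows essentially the same route as the paper's proof: a Baire category intersection, inside the lean aperiodic model, of the residual sets coming from Bowen's theorem (transferred to $\HEr$), Theorem~\ref{thm:co-am}, Theorem~\ref{thm:core}, and Le-Ma\^itre's genericity result in the cost-one case, followed by pushing $\mu$ forward under the stabilizer map to get $\nu$. The extra care you take — using density of $\alpha(\F_2)$ in $[E]$ (via Theorem~\ref{thm:LM} and Proposition~\ref{prop:dense->ht}) to ensure the $\alpha(\F_2)$-orbit relation is actually $E$, and reducing to the ergodic aperiodic case — is left implicit in the paper's short argument but does not constitute a different approach.
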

\begin{proof}
By Bowen's theorem a generic $\phi \in \HEr$ induces an isomorphism in the factor map \ref{eqn:factor}. The fact that the resulting IRS $\nu := \Phi_*(\mu)$ is generically supported on core free, co-amenable and co-HT subgroups follows from Theorems \ref{thm:core}, \ref{thm:co-am} and \cite[Theorem 5.1]{LM:lean} respectively. By Baire's category theorem a generic $\phi$ will satisfy all of these conditions simultaneously.
\end{proof}
In particular even if we take a non hyperfinite equivalence relation of cost one we see that a generic action will have F\o lner sets in almost every orbit. Such actions on non hyperfinite equivalence relations were first constructed by Kaimanovich in\cite{Kaimanovich:amen_hf}.

 \section{Basic lemmata and definitions}
 \begin{lemma} \label{lem:rep}
Let $E$ be an ergodic equivalence relation on a standard Borel probability space $(X,\Bc,\mu)$ and let $\sigma \in [E]$. Then given any $A \in \mathcal{B}$ and $\tau \in [E]$ there exists  $\sigma_1 \in [E]$ such that $\sigma_1|_A = \tau |_A$ and $d_{u}(\sigma, \sigma_1) < 2 \mu(A)$.
 \end{lemma}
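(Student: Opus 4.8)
The plan is to leave $\sigma$ untouched except on a set of controlled measure: I would over-write it by $\tau$ on $A$ and then repair the resulting map so that it lands back in $[E]$. Put $B:=\sigma(A)$ and $C:=\tau(A)$; since $\sigma$ and $\tau$ are measure preserving, $\mu(B)=\mu(C)=\mu(A)$. If one simply declares $\sigma_1:=\tau$ on $A$ and $\sigma_1:=\sigma$ on $X\setminus A$, the resulting map is Borel and moves every point inside its $E$-class, but it is in general neither injective nor surjective: the points of $\sigma^{-1}(C\setminus B)\subseteq X\setminus A$ now collide (through $\sigma$) with points of $A$ (through $\tau$) inside $C\setminus B$, and $B\setminus C$ is not in the image. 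From $\mu(B)=\mu(C)$ one gets $\mu(C\setminus B)=\mu(B\setminus C)$, hence also $\mu\bigl(\sigma^{-1}(C\setminus B)\bigr)=\mu(B\setminus C)$.

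The only substantial ingredient is the standard equidecomposability fact for ergodic equivalence relations: in an ergodic (measure preserving, as throughout) countable Borel equivalence relation, any two Borel sets of equal measure are joined by a Borel bijection whose graph is contained in $E$ (see \cite{Kechris:Global_aspects, Kechris_Miller}; this is exactly where ergodicity is used, and it is proved by a Feldman--Moore/exhaustion argument). I would apply it to $U:=\sigma^{-1}(C\setminus B)$ and $V:=B\setminus C$ to obtain a Borel bijection $\psi\colon U\to V$ with $\mathrm{graph}(\psi)\subseteq E$, and then set
\[
  \sigma_1(x)=\begin{cases}\tau(x),&x\in A,\\ \psi(x),&x\in U,\\ \sigma(x),&x\in X\setminus(A\cup U).\end{cases}
\]

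What remains is bookkeeping. The sets $A$ and $U$ are disjoint, since $\sigma(U)=C\setminus B$ is disjoint from $\sigma(A)=B$, so the three clauses partition $X$; their images $\tau(A)=C$, $\psi(U)=B\setminus C$ and $\sigma\bigl(X\setminus(A\cup U)\bigr)=X\setminus(B\cup C)$ partition $X$ as well, whence $\sigma_1$ is a Borel bijection of $X$. Since $\tau,\sigma\in[E]$ and $\mathrm{graph}(\psi)\subseteq E$, every point is moved inside its $E$-class, so $\sigma_1\in[E]$; and $\sigma_1|_A=\tau|_A$ by construction. Finally $\sigma_1$ agrees with $\sigma$ outside $A\cup U$, so
\[
  d_u(\sigma,\sigma_1)\le\mu(A)+\mu(U)=\mu(A)+\mu(C\setminus B)\le\mu(A)+\mu(C)=2\mu(A),
\]
and the last inequality is strict whenever $\mu\bigl(\sigma(A)\cap\tau(A)\bigr)>0$. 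I do not expect any real obstacle here: the content is packaged entirely in the equidecomposability lemma, and once $\psi$ is in hand the verification that the patched map belongs to $[E]$ is purely combinatorial. (If $\sigma(A)$ and $\tau(A)$ happen to be essentially disjoint, the value $2\mu(A)$ is in fact attained and not improvable; but in the uses of the lemma $\tau$ will be close to $\sigma$, so the bound is comfortably strict.)
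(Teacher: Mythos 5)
Your construction is exactly the paper's: you overwrite $\sigma$ by $\tau$ on $A$, keep $\sigma$ off $A\cup\sigma^{-1}\tau(A)$, and use ergodicity to transport $U=\sigma^{-1}\tau(A)\setminus A$ onto $\sigma(A)\setminus\tau(A)$ inside $E$-classes (your $\psi$ is the paper's $\eta$), with the only difference that you spell out the bijectivity check and the equidecomposability fact that the paper invokes implicitly. The caveat you note, that the bound may fail to be strict when $\sigma(A)$ and $\tau(A)$ are essentially disjoint, applies equally to the paper's own proof and is harmless in every application of the lemma, where only a bound of order $\mu(A)$ is used.
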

 \begin{proof} Let
 \begin{equation}
 \sigma_1(x) = \left \{
    \begin{array}{lll} \tau(x) & \qquad & x \in A \\
                                \sigma(x) & \qquad & x \not \in A \cup \sigma^{-1} \tau (A) \\
                                \eta(x) & \qquad & {\text{otherwise}}
    \end{array} \right.
 \end{equation}
 where $\eta \in [E]$ is some element taking $\sigma^{-1} \tau A \setminus A$ to $\sigma(A) \setminus \tau(A)$ the existence of which is guaranteed by the fact that $E$ is ergodic and that these two sets have the same measure.
\begin{figure}[ht] \label{fig:lem_1}
\centering \def\svgwidth{150pt}
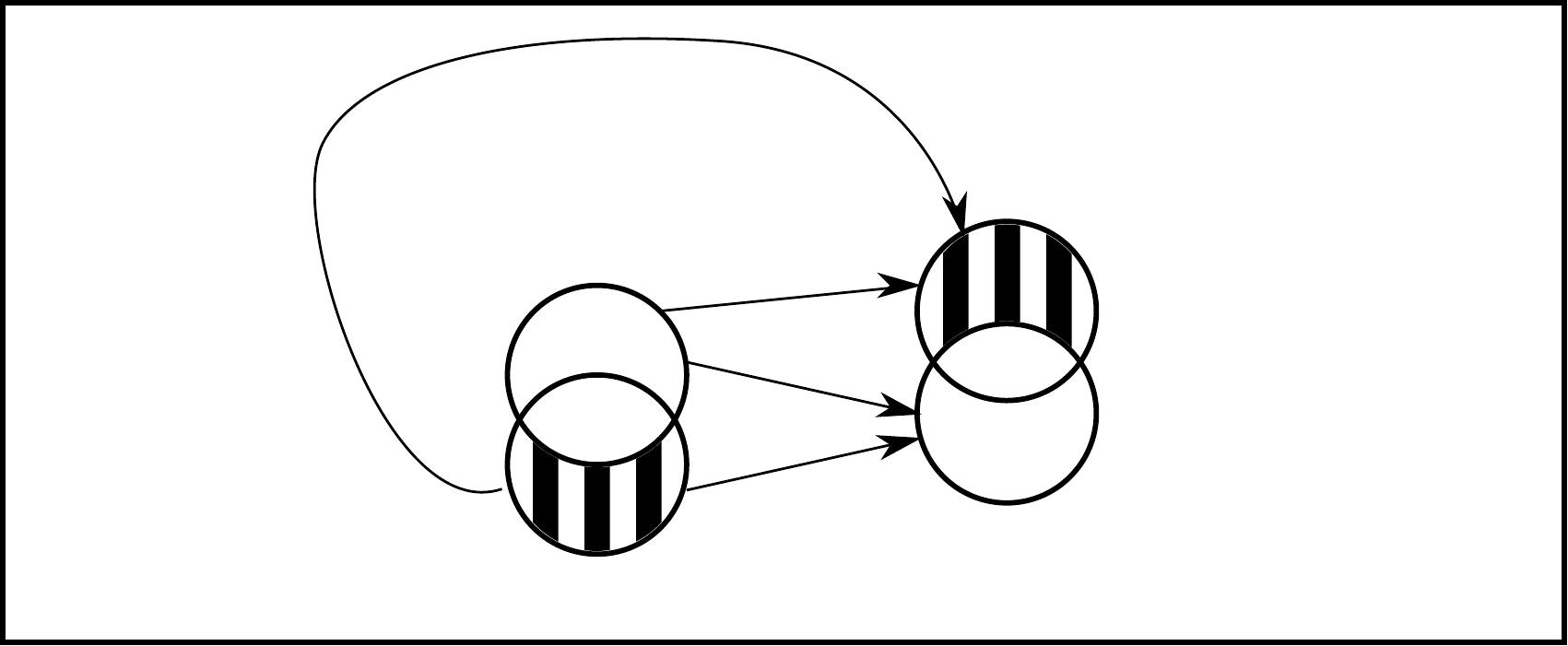 
\caption{Changing $\sigma$ on a small set $A$.}
\end{figure}
\end{proof}
\begin{lemma} \label{lem:cont}
The substitution map $\HEr \arrow [E]$ given by $\alpha \mapsto \alpha(\gamma)$ for some fixed $\gamma \in \F_r$ is continuous. In fact $d_u(\alpha(\gamma), \beta(\gamma)) \le \ell(\gamma) d_u(\alpha, \beta), \ \forall \alpha,\beta \in \HEr$, where $\ell(\gamma)$ is the word length of $\gamma$ with respect to the standard set of generators.
\end{lemma}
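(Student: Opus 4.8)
The plan is to reduce the statement to two elementary properties of the uniform metric $d_u$ on $[E]$:
\begin{itemize}
\item[(i)] \emph{inversion invariance}: $d_u(g^{-1},h^{-1}) = d_u(g,h)$ for all $g,h \in [E]$;
\item[(ii)] \emph{subadditivity under composition}: $d_u(g_1 g_2, h_1 h_2) \le d_u(g_1,h_1) + d_u(g_2,h_2)$ for all $g_1,g_2,h_1,h_2 \in [E]$.
\end{itemize}
Granting (i) and (ii), fix a reduced word $\gamma = s_{i_1}^{\epsilon_1}\cdots s_{i_n}^{\epsilon_n}$ with $n = \ell(\gamma)$ and $\epsilon_k \in \{\pm 1\}$, so that $\alpha(\gamma) = \alpha(s_{i_1})^{\epsilon_1}\cdots \alpha(s_{i_n})^{\epsilon_n}$ and likewise for $\beta$. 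An induction on $n$ (with the trivial base case $\gamma = e$, where both sides are the identity) using (ii) gives
$$ d_u(\alpha(\gamma),\beta(\gamma)) \le \sum_{k=1}^n d_u\big(\alpha(s_{i_k})^{\epsilon_k}, \beta(s_{i_k})^{\epsilon_k}\big). $$
By (i) each summand equals $d_u(\alpha(s_{i_k}),\beta(s_{i_k}))$, which is at most $d_u(\alpha,\beta)$ by the definition of the sup-metric on $\HEr \subseteq [E]^r$ in \eqref{eqn:pol_space}. Summing the $n$ terms yields $d_u(\alpha(\gamma),\beta(\gamma)) \le \ell(\gamma)\, d_u(\alpha,\beta)$, and a Lipschitz map between metric spaces is continuous.

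It remains to check (i) and (ii), both of which are immediate from the definition $d_u(\phi,\psi) = \mu(\{x : \phi(x) \ne \psi(x)\})$ together with the fact that elements of $[E]$ preserve $\mu$. For (ii): if $g_1 g_2 x \ne h_1 h_2 x$, then either $g_2 x \ne h_2 x$, or $g_2 x = h_2 x =: y$ and $g_1 y \ne h_1 y$; hence
$$ \{x : g_1 g_2 x \ne h_1 h_2 x\} \subseteq \{x : g_2 x \ne h_2 x\} \;\cup\; g_2^{-1}\{y : g_1 y \ne h_1 y\}, $$
and applying $\mu$ and the $\mu$-invariance of $g_2$ gives the inequality. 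For (i): the change of variable $x = g y$ identifies the set $\{x : g^{-1}x \ne h^{-1}x\}$ with $g(\{y : g y \ne h y\})$, which has the same $\mu$-measure since $g$ preserves $\mu$; thus $d_u(g^{-1},h^{-1}) = d_u(g,h)$.

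There is no genuine obstacle here; the lemma is elementary. The only points deserving a little care are that one should fix a specific word for $\gamma$ at the outset — any word works, and the reduced word is what produces the sharp constant $\ell(\gamma)$ — and that the exponents $\pm 1$ on the generators must be dealt with explicitly via the identity in (i) rather than absorbed silently.
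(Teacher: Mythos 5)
Your proof is correct and is essentially the same argument as the paper's: the paper also proceeds by induction on $\ell(\gamma)$, peeling off one letter at a time via the containment $V(\gamma) \subset V(s) \cup (\alpha(s))^{-1}V(\gamma')$ of disagreement sets and using measure invariance, which is exactly your subadditivity property (ii). The only cosmetic difference is that you spell out the inversion invariance (i) needed to handle letters $s^{-1} \in S^{-1}$, a point the paper absorbs silently into its base case ``by definition of the uniform metric.''
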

\begin{proof}
We prove this by induction on $\ell = \ell(\gamma)$. Let $V(\gamma) = \{x \in X \ | \ \alpha (\gamma,x) \ne \beta (\gamma,x) \}$. By definition of the uniform metric $\mu(V(s)) \le d_u(\alpha,\beta), \forall s \in S \sqcup S^{-1}$ which means that the lemma is true for $\ell = 1$. Assume now that $\gamma = \gamma' s$ where $s \in S \sqcup S^{-1}$ and $\gamma' \in \F_r$ with $\ell(\gamma') = \ell -1$, then
$$V(\gamma) \subset V(s) \cup (\alpha (s))^{-1} (V(\gamma'))$$
and the lemma follows.
\end{proof}

We recall that a Schreier graph of $\F_r$ is a $2r$-regular rooted graph $(Z,z_0) = (V,E,z_0)$ with a labeling of the oriented edges $\iota: E \arrow S \sqcup S^{-1}$ satisfying the two properties $\iota(\overline{e}) = \iota(e)^{-1}$ and $\iota^{-1} (\Lk(v)) = S \sqcup S^{-1} \ \forall v \in V$. To every $\alpha \in \Hom(\F_r, [E])$ and $x \in X$ we can associate the corresponding Schreier graph $\Sch(\alpha, x) = (V,E,x)$ with vertex set $V = \alpha(\F_r) \cdot x$ and an edge labeled $s$ connecting $v$ and $\alpha(s)(v)$ for every $v \in V$ and $s \in S \sqcup S^{-1}$. By an isomorphism of Schreier graphs we mean a graph isomorphism that respects the base vertex and the edge coloring. Let $\Sgr(r)$ be the collection of isomorphism classes of $\F_r$-Schreier graphs with the natural topology that turns it into a compact metrizable space. A natural countable basis for the topology on $\Sgr$, is given by sets of the form 
$$U(f,R) = \{Z \in \Sgr \ | \ B_Z(R) \cong B_f(R)\} \qquad f \in \Sgr, R \in \N.$$ The set of all Schreier graphs such that the ball of radius $R$ around the root is isomorphic, as a labeled graph, to the ball of the same radius in a fixed Schreier graph $f \in \Sgr$. 
\begin{lemma} \label{lem:Gd}
For a subset $\Xi \subset \Sgr(r)$ we define
$$\tilde{\Xi} = \{\alpha \in \Hom(\F_r, [E]) \ | \ \Sch(\alpha,x) \in \Xi {\text{ for $\mu$-almost every }} x \in X \}.$$ Then $\tilde{\Xi}$ is $G_{\delta}$ in $\Hom(\F_r, [E])$ whenever $\Xi$ is $G_{\delta}$ in $\Sgr(r)$.
\end{lemma}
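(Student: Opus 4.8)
\smallskip
\noindent\emph{Proof strategy.} The plan is to reduce to the case that $\Xi$ is open and then to show that $\tilde\Xi$ is in fact closed. Writing a $G_\delta$ set as $\Xi=\bigcap_n\Xi_n$ with each $\Xi_n$ open, the first step is to observe that $\tilde\Xi=\bigcap_n\widetilde{\Xi_n}$: a countable intersection of conull sets is conull and, conversely, every member of such an intersection is conull, so $\Sch(\alpha,x)\in\Xi$ for $\mu$-a.e.\ $x$ precisely when $\Sch(\alpha,x)\in\Xi_n$ for $\mu$-a.e.\ $x$ for every $n$. Since a countable intersection of $G_\delta$ (a fortiori of closed) sets is $G_\delta$, it then suffices to handle open $\Xi$. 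At this point I would emphasize that one \emph{cannot} reduce further to the basic open sets: if $\Xi=\bigcup_iU(f_i,R_i)$ then in general $\tilde\Xi\ne\bigcup_i\widetilde{U(f_i,R_i)}$, because for different points $x$ the graph $\Sch(\alpha,x)$ may sit in different pieces of the union. Instead the argument must go through lower semicontinuity of $\alpha\mapsto\mu\{x:\Sch(\alpha,x)\in\Xi\}$.

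The technical core is a finite, local coding of the event $B_{\Sch(\alpha,x)}(R)\cong B_f(R)$. For $R\in\N$ put $W_R=\{w\in\F_r:\ell(w)\le R\}$ (a finite set) and, for $\alpha\in\Hom(\F_r,[E])$ and $x\in X$, let $P_R(\alpha,x)$ be the partition of $W_R$ defined by $w\sim w'\iff\alpha(w)(x)=\alpha(w')(x)$. The first claim to establish is the routine combinatorial fact that the rooted edge-labeled graph $B_{\Sch(\alpha,x)}(R)$, up to isomorphism, is a function of $P_{R+1}(\alpha,x)$ alone: its vertices are the $P_{R+1}$-classes meeting $W_R$, its root is the class of $e$, and the $s$-labeled edge leaving the class of $w$ runs to the class of $sw$. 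Hence $\{x:B_{\Sch(\alpha,x)}(R)\cong B_f(R)\}=\{x:P_{R+1}(\alpha,x)\in\mathcal P_f\}$ for a suitable finite family $\mathcal P_f$ of partitions of $W_{R+1}$, and for each fixed partition $p$ of $W_{R+1}$ the set $\{x:P_{R+1}(\alpha,x)=p\}$ is one fixed finite Boolean combination of the sets $A_{w,w'}(\alpha):=\{x:\alpha(w)(x)=\alpha(w')(x)\}$ with $w,w'\in W_{R+1}$.

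Next I would prove the perturbation estimate: any $x$ lying in $A_{w,w'}(\alpha)\triangle A_{w,w'}(\beta)$ must satisfy $\alpha(w)(x)\ne\beta(w)(x)$ or $\alpha(w')(x)\ne\beta(w')(x)$, so Lemma~\ref{lem:cont} gives $\mu\bigl(A_{w,w'}(\alpha)\triangle A_{w,w'}(\beta)\bigr)\le(\ell(w)+\ell(w'))\,d_u(\alpha,\beta)$. Since only finitely many words of $W_{R+1}$ are involved, it follows that the $\mu$-measure of any fixed finite Boolean combination of the $A_{w,w'}(\alpha)$, $w,w'\in W_{R+1}$, is Lipschitz in $\alpha$; in particular $\alpha\mapsto\mu\{x:P_{R+1}(\alpha,x)=p\}$, and hence $\alpha\mapsto\mu\{x:B_{\Sch(\alpha,x)}(R)\cong B_f(R)\}=\sum_{p\in\mathcal P_f}\mu\{x:P_{R+1}(\alpha,x)=p\}$, are Lipschitz. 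Finally, for open $\Xi=\bigcup_iU(f_i,R_i)$ the set $\{x:\Sch(\alpha,x)\in\Xi\}$ is the increasing union over $k$ of $U_k(\alpha)=\bigcup_{i\le k}\{x:B_{\Sch(\alpha,x)}(R_i)\cong B_{f_i}(R_i)\}$, and each $\alpha\mapsto\mu(U_k(\alpha))$ is again Lipschitz by the same Boolean-combination bookkeeping (now using words of length $\le 1+\max_{i\le k}R_i$). Therefore $g(\alpha):=\mu\{x:\Sch(\alpha,x)\in\Xi\}=\sup_k\mu(U_k(\alpha))$ is a pointwise supremum of continuous functions, hence lower semicontinuous, and since $0\le g\le 1$ the set $\tilde\Xi=\{\alpha:g(\alpha)=1\}=\{\alpha:g(\alpha)\ge 1\}$ is closed, which together with the first paragraph proves the lemma.

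I expect the only genuine obstacle to be the conceptual one flagged above: the naive reduction basic-open-set by basic-open-set is false, so one is forced to prove lower semicontinuity of $g$ for an arbitrary open $\Xi$. Everything else --- identifying $B_{\Sch(\alpha,x)}(R)$ with a function of the finite partition $P_{R+1}(\alpha,x)$, writing the relevant subsets of $X$ as Boolean combinations of the $A_{w,w'}$, and the perturbation estimate itself --- is bookkeeping supported directly by Lemma~\ref{lem:cont}.
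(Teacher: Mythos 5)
The technical core of your argument is correct and is, in substance, the paper's own estimate: the finite local coding of $B_{\Sch(\alpha,x)}(R)$ by the partition $P_{R+1}(\alpha,x)$, the bound $\mu\bigl(A_{w,w'}(\alpha)\triangle A_{w,w'}(\beta)\bigr)\le(\ell(w)+\ell(w'))\,d_u(\alpha,\beta)$ from Lemma \ref{lem:cont}, and the resulting Lipschitz continuity of $\alpha\mapsto\mu(U_k(\alpha))$ play exactly the role of the paper's step where a finite subfamily of basic open sets of total measure $>1-\eta$ is extracted and balls of radius $R=\max R_j$ are compared via Lemma \ref{lem:cont} and a union bound. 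The reduction to open $\Xi$, and your warning that one cannot reduce further to individual basic open sets, are also fine.

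The final deduction, however, fails. From lower semicontinuity of $g(\alpha)=\mu\{x:\Sch(\alpha,x)\in\Xi\}$ you conclude that $\tilde{\Xi}=\{g\ge 1\}$ is closed; but for a lower semicontinuous function it is the sets $\{g>c\}$ that are open (equivalently $\{g\le c\}$ that are closed), whereas closedness of $\{g\ge c\}$ would require \emph{upper} semicontinuity. Moreover the closedness claim is false in substance, not merely unproven: take $\Xi\subset\Sgr(r)$ to be the set of finite Schreier graphs, which is open, and let $E=E_0$ be hyperfinite; then $\tilde{\Xi}=\Fin(r)$ is dense in $\Hom(\F_r,[E_0])$ by the proof of Theorem \ref{thm:periodic_hf}, yet it is a proper subset (any $\alpha$ with $\alpha(s_1)$ aperiodic lies outside it), so it cannot be closed. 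The lemma nevertheless survives on the strength of what you actually proved: lower semicontinuity gives that each $\{g>1-1/n\}$ is open, and $\tilde{\Xi}=\{g=1\}=\bigcap_{n}\{g>1-1/n\}$ is therefore $G_\delta$ --- which is precisely the paper's route, via the openness of the sets $\tilde{\Xi}(\epsilon)=\{\alpha: g(\alpha)>1-\epsilon\}$. So replace ``closed'' by this intersection and your proof is complete and essentially identical to the paper's.
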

\begin{proof}
It is enough to prove the statement if $\Xi$ is open. Indeed if $\Xi = \cap_i \Xi_i$ is a countable intersection of open sets $\Xi_i$ then
$\tilde{\Xi} = \cap_i \tilde{\Xi_i}$.
So we will assume that $\Xi$ is open. We can realize $\Xi$ as a union of basic open sets $\Xi = \cup_{j \in J} U(f_j,R_j)$, where $\{f_j \ | \ j \in J \} \subset \Sgr$ is a set of Schreier graphs and $\{R_j \ | \ j \in J \}$ corresponding radii. Set
\begin{equation*}
\tilde{\Xi}(\epsilon) = \left \{ \alpha \in \Hom(\F_r, [E]) \ | \ \mu \left \{ x \in X \ | \ \Sch(\alpha,x) \in \Xi \right \} > 1-\epsilon \right \} \\
\end{equation*}
Since $\tilde{\Xi} = \cap_{m \in \N} \tilde{\Xi}(1/m)$, it is enough to prove that $\tilde{\Xi}(\epsilon)$ is open for all $\epsilon > 0$. Now for every $\alpha \in \tilde{\Xi}(\epsilon)$ one can find a finite subset $JF \subset J$ such that $\mu\{x \in X \ | \ \Sch(\alpha,x) \in U(f_j,R_j) {\text{ for some }} j \in JF \} > 1-\eta> 1-\epsilon$, for some $\eta<\epsilon$. Let $R = \max \{R_j \ | \ j \in JF \}$ be the maximal of these radii.

For $\beta$ close enough to $\alpha$, say $d_u(\alpha,\beta) < \delta$, consider the set $$\{x \in X \ | \ \Sch(\beta,x) \in U(f_j,R_j) {\text{ for some }} j \in JF \}.$$ We want to show that if $\delta$ is sufficiently small then this set still has a measure larger than $1-\epsilon$. Let $\Omega = B_{\F_r}(2R+1) = \{\gamma \in \F_r \ | \ \ell(\gamma) < 2R+1 \}$. Consider the set
$$B = \{ x \in X \ | \ \alpha(\gamma,x) = \beta(\gamma,x) \ \forall \gamma \in \Omega \}$$ Applying Lemma \ref{lem:cont} to each $\gamma \in \Omega$ separately and taking the union bound we see that
$$\mu(B) \ge 1 - \delta (2R+1) |\Omega| \ge 1-\delta (2R+1) (2r)^{2R+1} > 1 - \eta'.$$ for some $\eta'<\epsilon-\eta$, where the last inequality is easily attained by choosing $\delta$ sufficiently small. For every $x \in B$ the two balls in the Schreier graphs are isomorphic
$B_{\Sch(\alpha,x)} (R) \cong B_{\Sch(\beta,x)}(R)$. Indeed two elements of $\gamma, \gamma' \in \Gamma$:
\begin{itemize}
\item Represent the same vertex in the Schreier graph $\Sch(\alpha,x)$ if and only if $\alpha(\gamma^{-1} \gamma', x) = x$.
\item Represent two vertices connected by an edge labeled $s$ if and only if $\alpha(\gamma^{-1} s \gamma',x) =x$.
\end{itemize}
But if $\ell(\gamma), \ell(\gamma') < R$ then all of the above discussion implies that these two quarries would yield the exact same results if we apply $\beta$ instead of $\alpha$. This finishes the proof, as clearly $\mu\{x \in X \ | \ \Sch(\beta,x) \in U(f_j,R_j) {\text{ for some }} j \in JF \})>1-\eta-\eta'>1-\epsilon$.  
\end{proof}

\section{Periodicity vs. hyperfinitness}
\noindent
In this section we prove Theorem \ref{thm:periodic_hf}. We start with a lemma of independent interest.
\begin{lemma} \label{lem:per_dense}
Let $\alpha_n,\alpha \in \Hom(\F_r,\Aut(X,\Bc,\mu))$ with $\lim_n \alpha_n \stackrel{u}{\arrow} \alpha$ in the uniform topology. If the orbits of $\alpha_n(\F_r)$ are almost surely finite for every $n$ then the equivalence relation spanned by $\alpha(\F_r)$ is hyperfinite.
\end{lemma}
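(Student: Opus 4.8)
The plan is to realize $E_{\alpha}$, the equivalence relation spanned by $\alpha(\F_r)$, as an ascending union $E_{\alpha}=\bigcup_n F_n$ of \emph{periodic} sub-equivalence relations, each $F_n$ carved out of the periodic relation $E_{\alpha_n}$; by the very definition of hyperfiniteness this finishes the proof. (We argue for $r<\infty$, the case needed in Theorem~\ref{thm:periodic_hf}.) First I would arrange pointwise eventual agreement along orbits: passing to a subsequence, assume $d_u(\alpha_n(s_i),\alpha(s_i))<4^{-n}$ for all $i$ and $n$, so that by bi-invariance of $d_u$ the same bound holds with $s_i$ replaced by $s_i^{-1}$. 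Put $B_n^i=\{x: \alpha_n(s_i^{\pm1})(x)=\alpha(s_i^{\pm1})(x)\}$, $D_n^i=\bigcap_{m\ge n}B_m^i$ and $D_n=\bigcap_{i\le r}D_n^i$. Then $D_n\subseteq D_{n+1}$, $\sum_n\mu(X\setminus D_n)<\infty$ (since $\mu(X\setminus D_n^i)\le\sum_{m\ge n}2\cdot 4^{-m}$), and on $D_n$ every $\alpha_m(s_i^{\pm1})$ with $m\ge n$ agrees with $\alpha(s_i^{\pm1})$. By Borel--Cantelli the set of points lying outside $D_n$ for infinitely many $n$ is null; its $E_{\alpha}$-saturation is still null ($E_{\alpha}$ being a countable $\mu$-preserving relation), so after discarding it we may assume that every point $x$ of the remaining conull $E_{\alpha}$-invariant set has a finite threshold $K(x)$ with $x\in D_n$ for all $n\ge K(x)$.

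Next, let $F_n$ be the equivalence relation generated by the partial bijections $\alpha(s_i^{\pm1})|_{D_n^i}$, $i=1,\dots,r$. Three short checks remain. Monotonicity: $D_n^i\subseteq D_{n+1}^i$ makes each generator of $F_n$ a restriction of a generator of $F_{n+1}$, so $F_n\subseteq F_{n+1}$. Periodicity: on $D_n^i$ we have $\alpha(s_i^{\pm1})=\alpha_n(s_i^{\pm1})$, so the graphing defining $F_n$ is a subgraphing of the graphing $\{\alpha_n(s_i^{\pm1})\}$ of $E_{\alpha_n}$; hence $F_n\subseteq E_{\alpha_n}$, and a sub-equivalence relation of a periodic relation is periodic. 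Exhaustion: the same agreement shows each generator of $F_n$ also restricts a generator of $\alpha$, so $F_n\subseteq E_{\alpha}$; conversely, if $x,y$ lie in the good set and $y=\alpha(t_m\cdots t_1)x$ with $t_j\in\{s_1^{\pm1},\dots,s_r^{\pm1}\}$, follow the $\alpha$-path $x=x_0,\, x_1=\alpha(t_1)x_0,\, \dots,\, x_m=\alpha(t_m)x_{m-1}=y$; all the $x_j$ sit in the $E_{\alpha}$-orbit of $x$, hence in the good set, so for $n\ge\max_j K(x_j)$ each $x_{j-1}$ lies in the appropriate $D_n^i$, making $(x_{j-1},x_j)$ an $F_n$-edge and therefore $(x,y)\in F_n$. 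Thus $E_{\alpha}=\bigcup_n F_n$ up to a null set, an ascending union of periodic relations.

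The step I expect to be the real obstacle is getting genuine monotonicity. The naive candidate — the relation generated by the $\alpha$-edges over the set where $\alpha_n$ agrees with $\alpha$ — is periodic and sits inside $E_{\alpha}$, but those agreement sets are not nested in $n$, and the relation generated by finitely many such pieces need not have finite classes. Replacing the $n$-th agreement set by its tail-intersection $\bigcap_{m\ge n}B_m^i$ is precisely what forces $F_n\subseteq F_{n+1}$ while keeping $F_n$ inside the periodic relation $E_{\alpha_n}$; the only cost is that $\mu(X\setminus D_n^i)$ is now bounded by $\sum_{m\ge n}\mu(X\setminus B_m^i)$, still summable by the geometric choice of the subsequence, so the Borel--Cantelli exhaustion is unaffected. (Alternatively, one could invoke Connes--Feldman--Weiss and argue that $E_{\alpha}$ is amenable by pushing forward the normalized counting measures on the finite $\alpha_n$-orbits; the direct construction above is more elementary.)
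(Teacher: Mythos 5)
Your proof is correct, and its overall strategy is the same as the paper's: pass to a rapidly convergent subsequence, form tail-intersected agreement sets (your $D_n$ are essentially the paper's $A_n$, except that you also require agreement of the inverse generators), build an ascending sequence of periodic sub-relations living on those sets, and discard a null saturated set to obtain exhaustion. The genuine divergence is in how the periodic relations are defined and how exhaustion is verified. The paper takes $E_n$ to be the restriction of the full $\alpha_n$-orbit relation $E'_n$ to $A_n$ (extended by the identity), and checks $E\subset\bigcup_n E_n$ on a conull set by estimating, for each fixed $\gamma\in\F_r$, the measure of $\{x:(x,\alpha(\gamma)x)\notin E_n\}$ and summing over $\gamma$; you instead generate $F_n$ by the partial bijections $\alpha(s_i^{\pm1})|_{D_n^i}$ and check exhaustion pointwise along orbits via the thresholds $K(x)$, after saturating the bad set. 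Your choice buys two immediate inclusions: $F_n\subset F_{n+1}$ (from $D_n^i\subset D_{n+1}^i$) and $F_n\subset E_{\alpha_n}$ (giving periodicity), whereas with the paper's definition the nesting $E_n\subset E_{n+1}$ involves comparing the distinct relations $E'_n$ and $E'_{n+1}$ away from the agreement sets and is the more delicate step; the paper's per-element measure estimate, in turn, avoids your (harmless) appeals to inverse-invariance of $d_u$ and to saturation. Both arguments use $r<\infty$ through the union bound over generators, which you flag explicitly and which matches the way Lemma \ref{lem:per_dense} is used in Theorem \ref{thm:periodic_hf}.
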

\begin{proof} Let $E$ denote the equivalence relation induced by $\alpha(\F_r)$. We would like to show that there is a co-null subset $Y$  of $X$ such that the restriction of $E$ to $Y$ is hyperfinite. We may suppose, passing to  a subsequence of $\{\alpha_n\}$ if necessary, that $d_n:=d_u(\alpha_n,\alpha)<\frac{1}{2^nr}$ for all $n\in\mathbb{N}$, where $d_u$ denotes the uniform distance as usual. Let $\{s_1, \ldots, s_r\}$ be the fixed set of generators of $\F_r$. Let $E_n'$ denote the equivalence relation generated by $\alpha_n$. For every $n$ consider the  subset  $A_n':=\{x\in X \ | \ \alpha_n(s_i)(x)=\alpha(s_i)(x),\;0\leq i\leq r \}$. Clearly  $\mu(A_n')>1-\frac{1}{2^n}$. Define $A_n:=A_n'\setminus\cup_{k=n+1}^{\infty}(X\setminus A_k')$. Then $\{A_n\}$ is an ascending sequence of Borel subsets of $X$
, $\mu(A_n)>1-\frac{1}{2^{n-1}}$, and $\alpha_n(s_i)(x)=\alpha(s_i)(x)$ for all $x\in A_n$ and $1\leq i\leq r$. Let $E_n$ be the Borel equivalence relation defined as $E_{n|A_n}'$ on $A_n$ and as the identity relation on $X\setminus A_n$. Then $\{E_n\}_{n=1}^{\infty}$ is an ascending sequence of periodic equivalence relations. Fix $\gamma\in \F_r$, and suppose $\gamma=s_{i_1}s_{i_2}\ldots s_{i_k}$. Note that whenever $(x,\alpha(\gamma)(x)) \not \in E_n$, then either $(x,\alpha(s_{i_k})(x)) \not \in E_n$ or $(\alpha(s_{i_k}(x), \alpha(s_{i_{k-1}} s_{i_k})(x))) \not \in E_n$, or $\ldots$ $(\alpha(s_{i_2}\ldots s_{i_{k-1}} s_{i_k})(x),\alpha(\gamma)(x)) \not \in E_n$. Hence, for a given $n\in\mathbb{N}$, we have 
\begin{small} 
\begin{eqnarray*}
& \mu & (\{x\in X \ | \ (x,\alpha(\gamma x))\notin E_n\}) \\
&&\leq  \sum_{j=1}^k\mu(\{x\in X|(\alpha(s_{i_{j+1}}\ldots s_{i_k})x,\alpha(s_{i_j}\ldots s_{i_k})x)\notin E_n\}) \\ 
&& \leq \sum_{j=1}^k\mu(\{x\in X|(x,\alpha(s_{i_j})x)\notin E_n\}) \\
&& \leq  \sum_{j=1}^k(\mu(X\setminus A_n\cup X\setminus s_{i_j}^{-1}A_n))\leq 2k\cdot\frac{1}{2^{n-1}}
\end{eqnarray*} \end{small}
 Let $B_{\gamma}:=\{x\in X|(x,\alpha(\gamma) x)\notin E_n, \forall n\in\mathbb{N}\}$. Then by the calculation above $\mu(B_{\gamma})=0$ and $\mu(\cup_{\gamma\in \F_r}B_{\gamma})=0$. Let $Y:=X\setminus\cup_{\gamma\in \F_r}B_{\gamma}$. Then $\mu(Y)=1$ and $E_{|Y}\subset(\cup E_n)_{|Y}$, so the restriction of $E$ to the co-null subset $Y$ is hyperfinite, proving the lemma.
 \end{proof}
\begin{remark}
Note that in the lemma above we may take any finitely generated group instead of $\F_r$, and we may also assume $\alpha_n$ hyperfinite for all $n\in\mathbb{N}$ instead of periodic. The last assertion is due to the fact that an ascending union of hyperfinite equivalence relations is hyperfinite \cite[Theorem 6.11]{Kechris_Miller} . Thus, the subset of  the space of actions of a finitely generated group consisting of actions that generate a hyperfinite equivalence relation is  closed in the uniform topology.
\end{remark}
\begin{proof} (of Theorem \ref{thm:periodic_hf})
We start with the case $r = \infty$. Let
$$\SPAN = \{\alpha \in \Hom(F_{\infty},[E]) \ | \ \alpha(F_{\infty}) {\text{ spans the equivalence relation }}E \}.$$
By the Feldman-Moore theorem \cite{FM:equiv_rel}, \cite[Theorem 1.3]{Kechris_Miller} we can assume that the equivalence relation $E$ is generated by a different action $\beta \in \Hom(F_{\infty},[E])$. Fixing this action $\beta$ once and for all it is clear that
$\SPAN = \bigcap \SPAN(l,\Omega),$
where the intersection is over all finite subsets $\Omega \subset F_{\infty}$, all $l \in \N$ and $\SPAN(l,\Omega)$ is the set
\begin{small}
$$\left \{ \alpha \in \Hom(F_{\infty},[E]) \  \left | \ \mu \left \{ x \in X \ | \ \forall \omega \in \Omega, \exists \gamma \in F_{\infty} {\text{ such that }} \alpha(\gamma) x = \beta(\omega) x \right \} > 1 - \frac{1}{l} \right. \right \}.$$
\end{small}
We claim that every $\SPAN(l,\Omega)$ is open and hence $\SPAN$ is $G_{\delta}$. Indeed if $\alpha \in \SPAN(l,\Omega)$ then in fact it is so by virtue of only finitely many $\gamma \in \F_r$. Namely for such $\alpha$ we find a finite set $Q \subset \F_r$ such that
$$\mu \left \{ x \in X \ | \ \forall \omega \in \Omega, \exists \gamma \in Q{\text{ such that }} \alpha(\gamma) x = \beta(\omega) x \right \} > 1- \frac{1}{l}.$$
If $d_u(\alpha,\alpha')$ is small enough the same will be true for $\alpha'$. Which proves that $\SPAN(l,\Omega)$ is open. 

We turn to demonstrate density of $\SPAN$. Fix a basic open set in $\Hom(F_{\infty},[E])$, of the form $U = U(n;U_1,U_2,\ldots U_n) = \{ \alpha \in \Hom(F_{\infty},[E]) \ | \ \alpha(s_i) \in U_i \}$  . It is clear that $U \cap \SPAN \ne \emptyset$, as one can always choose $\{\alpha(s_i) \ | \ i > N\}$ in such a way that these span the whole equivalence relation. Thus we have shown that $\SPAN \subset \Hom(F_{\infty},[E])$ is a dense $G_{\delta}$, proving statement (\ref{itm:inf_gen}) of the Theorem.

From now on we focus on finite $r$. As the collection of finite graphs is clearly open in $\Sgr$, Lemma \ref{lem:Gd} implies that the set
$$\Fin(r) = \{\alpha \in \Hom(\F_r,[E]) \ | \ {\text{ almost all }} \alpha(\F_r) {\text{ orbits are finite}} \}$$
 is $G_{\delta}$. In order to prove claim (\ref{itm:hyperfinite}) of the theorem we have to show that for every $1<r \in \N$, $\Fin(r) \subset \Hom(\F_r,[E])$ is dense  if and only if $E$ is hyperfinite.

Assume first that $E$ is hyperfinite and let us realize it as an ascending union of finite equivalence relations $E = \cup F_j$. Setting
$$X(n,\gamma) = \left \{ x \in X \ | \ \alpha(\gamma)x F_n x \right\},$$
it is clear that $X(n,\gamma^{-1}) = \alpha(\gamma) \left(X(n,\gamma) \right)$ and that we can choose some $N \in \N$ such that $$\mu (X(N,s)) > 1 - \epsilon \qquad \forall s \in S.$$
For every $s \in S$ we define
$$\beta(s)(x) = \left \{ \begin{array}{ll}
\alpha(s)(x) & {\text{if }} x \in X(N,s) \\
\alpha(s^{-\xi(x,s)} x) & x \in X(N,s^{-1}) \setminus X(N,s) \\
x & {\text{otherwise}}
\end{array} \right. $$
Where $\xi(x,\gamma)  \in \N$ is defined to be the minimal integer such that  $\alpha(\gamma^{-\xi(x,\gamma)}) x \not \in X(N, \gamma^{-1})$. It is easy to check that this is well defined and gives us the desired $\beta$.

Conversely, assume that the periodic representations are dense in $\Hom(\F_r,[E])$. By Lemma \ref{lem:per_dense},  every element of $\Hom(\F_r,[E])$ generates a hyperfinite sub-equivalence relation of $E$. By the Feldman-Moore theorem \cite{FM:equiv_rel}, \cite[Theorem 1.3]{Kechris_Miller} we can assume that $E$ is generated by the action of a group $\Gamma$ generated by  $\{\phi_1,\phi_2,\ldots\}$. Let $E_n \subset E$ be the equivalence relation generated by the action of  $\langle \phi_1,\ldots, \phi_n \rangle$. By the above observation $E_2$ is hyperfinite (being generated by an element of $\Hom(\F_r,[E])$). Now we argue by induction that $E_n$ is hyperfinite for every $n$. Indeed if $E_n$ is hyperfinite then it is generated by the action of a single element $\langle \psi_n \rangle < [E]$ and $E_{n+1}$ is therefore generated by the action of the two generated group $\langle \psi_n,\phi_{n+1} \rangle$, so that the above observation still applies. As $E$ is the ascending union of the $E_n$ it follows that $E$ is also hyperfinite, as claimed.
\end{proof}

\section{Amenability}
This section is dedicated to the proof of Theorem \ref{thm:co-am}.
We wish to show that the set:
$$\Amm := \left \{ \alpha \in \HEr \ | \ \alpha(\\F_r)_x < \Gamma {\text{ is co-amenable for almost all }} x \in X \right \}$$
is residual in $\HEr$.
\begin{proof}
We will prove that this set is a dense $G_{\delta}$. The $G_{\delta}$ condition follows, using Lemma \ref{lem:Gd}, from the fact that the collection of Schreier graphs that contain a $1/l$-F\o lner set is open in $\Sgr$. 

Now for density. Given any $\alpha \in \HEr$ and $\epsilon > 0$ we seek $\beta \in \Amm$ with $d_u(\alpha,\beta) < \epsilon$. The idea of the proof uses a variation on a  construction by Vadim Kaimanovich \cite{Kaimanovich:amen_hf} further developed in \cite[Theorem 9.7]{Kechris_Miller}. Let us define $S:=\alpha(s_1)$, and note that by ergodicity and Remark \ref{rem:ap->erg} we may assume $S$ is ergodic.  Let $A \subset X$ be a subset such that $\mu(A)<\frac{\epsilon}{2r}$, and let $\{A_n\}_{n=1}^{\infty}$ be a Borel partition of $A$ (such that $\mu(A_n)>0$ for infinitely many $n\in\mathbb{N}$). The restriction of $E$ to $A_n$ is aperiodic and so by \cite[proposition 7.4]{Kechris_Miller} we may choose a finite sub-equivalence relation $F_n\subset E_{|A_n}$ such that every equivalence class in $F_n$ has cardinality $n$. For every $F_n$ choose a transversal $T_n$ and let $T:=\cup_{n=1}^{\infty}T_n$. Such a transversal exists because $F_n$ has finite classes, take for instance a Borel linear ordering on $A_n$ and let the transversal be the minimal element in each class. We now define $\beta$ as follows: Let $\beta(s_r)$ be an element of $[E]$ that generates $F_n$ on each $A_n$, and such that $d_u(\alpha(s_r),\beta(s_r))<\frac{\epsilon}{r}$, constructed as in \ref{lem:rep}. Now for $1 \le i \le r$ we define $\beta(s_i)$ to be the first return map of $\alpha(s_i)$ to the set $X \setminus (A \setminus T)$. Note that $d_u(\beta(s_i),\alpha(s_i)) \le \epsilon/r$ as these two transformations definitely coincide on $X \setminus (A \cup \alpha(s_i^{-1}) (A))$. Since $\alpha(s_1)$ is ergodic on $X$, $\beta(s_1)$ will be ergodic on $X \setminus (A \setminus T)$. In particular for every almost every $y \in X \setminus A$ the $\beta(s_1)$ orbit of $y$ intersects $T_n$ for infinitely many $n\in\mathbb{N}$ (these values of $n$ for which $\mu(T_n)>0$). Thus for almost every $y\in X$ the orbit of $y$ under $\beta(\F_r)$ contains an equivalence class of $F_n$ for infinitely many $n\in\mathbb{N}$. consequently,  we may choose the sequence of F\o lner sets to be those $F_n$ equivalence classes.
\end{proof}

\section{co-HT subgroups after Le-Ma\^itre}

Theorem \ref{thm:LM} follows directly from the theorem of Le-Ma\^itre on the existence of an $\alpha \in \Hom(\F_r,[E])$ with a dense image, coupled with Proposition \ref{prop:dense->ht}. The proof of this proposition relies on the following two lemmas that were provided, complete with their proofs, by the referee to whom we are grateful! They replace a similar argument that was both erroneous and less elegant.  

\begin{lemma} \label{lem:ref1} Let $X$ be a standard Borel space, let $T$ be a Borel bijection of $X$. Then there exists a Borel partition $(A_k)_{k\in\N}$ of the support of $T$ such that for all $k\in\N$, $T(A_k)$ is disjoint from $A_k$.
\end{lemma}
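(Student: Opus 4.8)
The plan is to reduce the problem to a graph-coloring statement and invoke a standard Borel combinatorial fact. Consider the graph $G$ on the support $Y := \{x \in X \mid T(x) \neq x\}$ of $T$ whose edge set is $\{(x, T(x)) \mid x \in Y\} \cup \{(x, T^{-1}(x)) \mid x \in Y\}$; that is, we join $x$ to $T(x)$ for every $x$ in the support. This is a Borel graph, and every vertex has degree at most $2$ (the neighbours of $x$ are among $T(x)$ and $T^{-1}(x)$), so each connected component is either a finite cycle or a bi-infinite or half-infinite line. A partition $(A_k)_{k \in \N}$ with $T(A_k) \cap A_k = \emptyset$ for all $k$ is exactly a Borel proper vertex coloring of $G$: the condition says no edge of $G$ has both endpoints in the same piece.

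First I would recall the Kechris--Solecki--Todor\v{c}evi\'c theorem that a Borel graph of degree bounded by $d$ admits a Borel proper vertex coloring with $d+1$ colors; here $d = 2$, so three Borel pieces $A_0, A_1, A_2$ suffice, and we may set $A_k = \emptyset$ for $k \geq 3$ to get a partition indexed by $\N$. Alternatively, and perhaps more in keeping with the elementary spirit of the surrounding lemmas, I would argue directly: fix a Borel linear order on $X$ (available since $X$ is standard Borel). On each component one builds the coloring by a Borel transfinite/greedy procedure — on the finite cycles this is a finite task done uniformly via the order, and on the line components one can color by "parity of distance to a Borel-chosen reference structure", with the third color absorbing the unavoidable defect where a half-infinite line or odd cycle prevents a clean $2$-coloring. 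Either route yields the claim.

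The main obstacle is purely a Borelness/uniformity issue: there is no difficulty producing \emph{some} partition pointwise (any proper coloring of a max-degree-$2$ graph needs only $3$ colors), but one must perform the choice measurably/Borel-uniformly across all components simultaneously, including the odd cycles and the half-lines where a $2$-coloring fails. This is precisely what the bounded-degree Borel coloring theorem handles, so I expect the cleanest writeup to simply cite it; if one prefers self-containment, the work goes into checking that the greedy construction along a Borel linear order stays Borel, which is routine but slightly tedious.
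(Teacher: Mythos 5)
Your proposal is correct, but it takes a genuinely different route from the paper. You recast the statement as a proper Borel coloring problem for the Borel graph on $\supp T$ with edges $\{x,T(x)\}$ (after noting, correctly, that $\supp T$ is $T$-invariant so these edges stay inside the support), and then you invoke the Kechris--Solecki--Todor\v{c}evi\'c bound that a Borel graph of maximum degree $d$ has a Borel proper coloring with $d+1$ colors; with $d=2$ this even gives a partition into \emph{three} Borel pieces, padded by empty sets. The paper does something more elementary and self-contained: it fixes a countable separating family $\mathcal C$ for $X$, replaces it by the $T$-invariant countable algebra $\mathcal B$ it generates, and observes that every $x\in\supp T$ lies in some $B:=C\cap T^{-1}(X\setminus C)\in\mathcal B$ (where $C\in\mathcal B$ separates $x$ from $T(x)$), which automatically satisfies $T(B)\cap B=\emptyset$; enumerating these countably many sets and disjointifying gives the (countable, not finite) partition. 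So your approach buys a quantitative improvement (three pieces) at the cost of citing a nontrivial theorem, while the paper's argument is a two-paragraph direct construction that delivers exactly what Lemma \ref{lem:ref2} and Proposition \ref{prop:dense->ht} need; note also that the separating-algebra trick is essentially the countable-coloring core of the KST proof itself, so the two arguments are cousins. One small caveat: your ``alternative elementary'' sketch (2-coloring line components by parity relative to a Borel-chosen reference structure) is the weakest part, since on aperiodic $\Z$-lines no Borel 2-coloring exists in general and one needs a genuine marker-set argument there; as written it would need more care, but since your main route is the KST citation, this does not affect the correctness of the proposal.
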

\begin{proof}
The only thing we use about the standard Borel space $X$ it that it has a countable separating family $\mathcal C$, i.e. for \textit{all} $x\neq y\in X$ there is $C\in\mathcal C$ such that $x\in C$ but $y\not\in C$. Replacing $\mathcal C$ by the union of the $T^n$ translates of $\mathcal C$, we can assume that $\mathcal C$ is $T$-invariant. Then the algebra $\mathcal B$ (no $\sigma$ here!) generated by $\mathcal C$ is countable $\Gamma$-invariant. 

Let $x\in \supp T$, because $\mathcal B$ separates points, we may find $C\in\mathcal B$ such that $x\in C$ but $T(x)\not\in C$. In other words, $x$ belongs to $B:=C\cap T^{-1}(X\setminus C)$, and because $\mathcal B$ is a $T$-invariant algebra  we have $B\in \mathcal B$. By definition $T(B)$ is disjoint from $B$. 

We thus have a covering $(B_k)_{k\in\N}$ of $\supp T$ by elements of $\mathcal B$ such that for all $k\in\N$, $T(B_k)$ is disjoint from $B_k$. We then let $A_k:=B_k\setminus(\bigcup_{i<k} B_k)\in\mathcal B$, and we have the desired partition\footnote{We adopt the convention that a partition may contain many times the empty set!}.
\end{proof}
\begin{lemma} \label{lem:ref2} Let $X$ be a standard Borel space, let $(T_i)_{i=1}^n$ be  Borel bijections of $X$. Then there exists a Borel partition $(A_k)_{k\in\N}$ of $\bigcap_{i=1}^n\supp(T_i)$ such that for all $k\in\N$ and $i\in\{1,...,n\}$, $T_i(A_k)$ is disjoint from $A_k$.
\end{lemma}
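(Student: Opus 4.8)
The plan is to bootstrap from Lemma \ref{lem:ref1} (the case $n=1$) by iterating and taking common refinements. First I would apply Lemma \ref{lem:ref1} to each $T_i$ separately, obtaining for each $i\in\{1,\dots,n\}$ a Borel partition $(A^i_k)_{k\in\N}$ of $\supp(T_i)$ such that $T_i(A^i_k)$ is disjoint from $A^i_k$ for every $k$. Then I would form the common refinement: for each tuple $\bar k = (k_1,\dots,k_n)\in\N^n$ set $A_{\bar k} := \bigcap_{i=1}^n A^i_{k_i}$. These sets, as $\bar k$ ranges over the countable set $\N^n$, partition $\bigcap_{i=1}^n \supp(T_i)$, and after reindexing $\N^n$ by $\N$ (using the footnote convention that a partition may repeat the empty set, which is harmless) this is the required partition.

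The verification is the routine part: for each $i$ and each $\bar k$ we have $A_{\bar k}\subset A^i_{k_i}$, hence $T_i(A_{\bar k})\subset T_i(A^i_{k_i})$, which is disjoint from $A^i_{k_i}\supset A_{\bar k}$; therefore $T_i(A_{\bar k})\cap A_{\bar k}=\emptyset$. That each $A_{\bar k}$ is Borel is immediate since it is a finite intersection of Borel sets, and that they cover $\bigcap_i\supp(T_i)$ is clear since $x\in\bigcap_i\supp(T_i)$ lies in exactly one $A^i_{k_i}$ for each $i$. So the whole argument is essentially bookkeeping on top of the single-map case.

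There is really no substantial obstacle here — the content is entirely in Lemma \ref{lem:ref1}, and Lemma \ref{lem:ref2} is just its stabilization under finite intersections. The only point requiring a moment's care is the reindexing of a partition indexed by $\N^n$ (or indeed by $\N\times\N$, composing $n-1$ times) into one indexed by $\N$, together with the convention about empty pieces; alternatively one can prove the statement by an explicit induction on $n$, refining the partition for $(T_1,\dots,T_{n-1})$ by intersecting each of its pieces with the two-set partition $\{A,\supp(T_n)\setminus A\}$ coming from Lemma \ref{lem:ref1} applied to $T_n$ — but even that gives a partition indexed by $\N\times\N$, so the reindexing is unavoidable and completely harmless.
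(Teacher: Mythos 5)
Your proof is correct and is essentially the paper's argument: the paper builds the same common refinement, just presented as an induction on $n$ (intersecting the partition for $T_1,\dots,T_{n-1}$ with the one Lemma \ref{lem:ref1} gives for $T_n$), while you take all $n$ refinements at once over $\N^n$; the verification and the harmless reindexing/empty-piece convention are the same. One small inaccuracy in your side remark: Lemma \ref{lem:ref1} produces a countable partition, not a two-set partition $\{A,\supp(T_n)\setminus A\}$, but this does not affect your main argument.
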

\begin{proof}
The proof is by induction; the base case $n=1$ is true by the previous lemma. Now let $(A_k)$ be a partition of $\bigcap_{i=1}^{n-1}\supp(T_i)$ obtained by induction hypothesis for $T_1$,...,$T_{n-1}$, and let $(A'_k)$ be a partition of the support of $T_n$ obtained by the previous lemma applied to $T:=T_n$. The partition $(A_k\cap A'_l)_{k,l\in\N}$ does the job.
\end{proof}

\begin{proof} (of proposition \ref{prop:dense->ht})
By the Feldman-Moore theorem \cite{FM:equiv_rel}, \cite[Theorem 1.3]{Kechris_Miller} we can assume that the equivalence relation $E$ is generated by an action of a countable group $G < [E]$. We will use the $G$ action to reference points in the various equivalence classes. Given $m$ distinct elements $g_0,g_2,\ldots, g_{m-1} \in G$ consider the set 
\begin{eqnarray*}
S_{g_0,g_1,\ldots,g_{m-1}}& = &  \bigcap_{0 \le i \ne j <m} \supp (g_i^{-1} g_j) \\
 & = &  \{x \in X \ | \ g_j x \ne g_i x, \ \ 0 \le i \ne j \le m-1\}.
 \end{eqnarray*} Using Lemma \ref{lem:ref2} we can find a countable partition Borel partition $S_{g_0,g_1,\ldots,g_{m-1}} = \sqcup_{k \in \N} (A_k)_{k\in\N}$  such that $\mu(g_i (A_k) \cap g_j (A_k)) = 0, \ \forall k \in \N {\text{ and }} 0 \le i \ne j \le m-1$. Given a permutation $\tau \in S_m$, considered as a permutation on $\{0,1,\ldots, m-1\}$, define an element $h_k$ of the full group $[E]$ by $h_{k|g_iA_k}=g_{\tau(i)}g_{i|g_iA_k}^{-1}$. (On the rest of $X$, $h_k$ may be defined arbitrarily). By definition, $\forall x\in A_k$, $h_kg_ix=g_{\tau(i)}x$. Let $\delta_k\in\Delta$ be such that $d_u(\delta_k,h_k)<\frac{\epsilon}{2^k}$, where $d_u$ denotes the uniform distance. Then we see that for every $x$ in a subset of $S_{g_0,\ldots, g_{m-1}}$ of measure at least $\mu(S_{g_0,\ldots, g_{m-1}})-\epsilon$ there exists an element $\delta\in\Delta$ such that 
 \begin{equation} \label{eqn:realize_tau}
 \delta g_ix=g_{\tau(i)}x.
 \end{equation}
As $\epsilon$ may be chosen to be arbitrarily small, this implies that for almost every $x\in S_{g_0,\ldots, g_{m-1}}$, there exists $\delta\in\Delta$ as above. Let $N_{g_0,\ldots, g_{m-1}; \tau} \subset S_{g_0,\ldots,g_{m-1}}$ be the subset of all points $x \in S_{g_0,\ldots,g_{m-1}}$ for which there does not exist an element $\delta \in \Delta$ satisfying Equation (\ref{eqn:realize_tau}). We have just proved that $N_{g_0,\ldots, g_{m-1}; \tau} $ is a nullset. Now conclude the proof by noting that the action of $\Delta$ is highly transitive on almost all orbits if and only if $N := \bigcap N_{g_0,\ldots, g_{m-1},\tau}$ is a nullset. Where the intersection here is taken over all possible values of $m \in \N$, all $m$-tuples of distinct elements $(g_0,g_1,\ldots,g_{m-1}) \subset G^m$ and all possible permutations $\tau \in S_m$. 
\end{proof}

\section{Higher transitivity}
Here we prove Theorem \ref{thm:coHT}. Fix our equivalence relation to be the ergodic hyperfinite equivalence relation $E_0$. We want to show that the set
$$\HT = \{\alpha \in \HEr \ | \ \alpha(\Gamma) {\text{ is highly transitive on almost all equivalence classes}} \},$$
is dense $G_{\delta}$
\begin{proof} 
Let $\sigma \in [E_0]$ be an automorphism generating the equivalence relation. We will use $\sigma$ to reference points in the various equivalence classes. Given $m \in \N$, a permutation $\tau \in S_m$ (acting on the set $\{0,1,\ldots, m-1\}$) and a point $x$ we say that a specific permutation representation $\alpha \in \HEr$ {\it{realizes the permutation $\tau$ at $x$}} if there exits some $\gamma \in \Gamma$ such that $\alpha(\gamma)$ acts as the given permutation on the first $m$ points in the equivalence class of $x$ as follows:
$$\alpha(\gamma): (x,\sigma x ,\sigma^2 x \ldots, \sigma^{m-1} x) \mapsto (\sigma^{\tau(0)} x, \sigma^{\tau(1)} x, \ldots, \sigma^{\tau(m-1)} x).$$
let $HT(\tau) = \{ \alpha \in \HEr \ | \ \alpha {\text{ realizes $\tau$ at almost every }} x \in X\}.$
By Baire's theorem it is enough to show that the set $\HT(\tau)$ is a dense $G_{\delta}$ subset of $\HEr$ for every $m \in \N$ and every $\tau \in S_m$. Since clearly $\HT = \bigcap_{m \in \N, \sigma \in S_m} \HT(\tau)$. The $G_{\delta}$ claim follows directly from Lemma \ref{lem:Gd}. 

To prove density of $\HT(\tau)$, let $\alpha \in \HEr$ and $\epsilon > 0$ be given, we seek $\beta \in \HT(\tau)$ with $d_u(\alpha,\beta) < \epsilon$. By \cite[Theorem 3.4]{Kechris:Global_aspects} the conjugacy class of any aperiodic element in $[E]$ is uniformly dense within the set of aperiodic elements of $[E]$, hence after replacing $\alpha$ by a very close representation and conjugating we may assume that $\alpha(s_1) = \sigma$. 

Now use Lemma \ref{lem:ref2} to find a set $O$ of measure $0<\mu(O) < \epsilon/2m$ such that $\{\sigma^i O \ | \ 0 \le i \le m-1 \}$ are pairwise disjoint.  By Lemma \ref{lem:rep}, we can find an element $\beta \in \HEr$ such that $\beta(s_2,\sigma^i x) = \sigma^{\tau(i)} x \ \forall x \in O$ and such that $d_u(\alpha,\beta) < \epsilon$. Now by ergodicity for almost every $x \in X$ one can find an $n \in \N$ such that $\sigma^n x \in O$. We choose $n = n(x)$ to be the minimal natural number satisfying this property. Now clearly
$$\beta(s_1^{-n(x)} s_2 s_1^{n(x)}, \sigma^i x) = \sigma^{\tau(i)} x, \ \ {\text{for almost every }} x \in X, {\text{ and }} \forall 0 \le i \le m-1.$$
Proving that $\beta \in \HT(\tau)$. This completes the proof of higher transitivity, using Baire's theorem.
\end{proof}

 \section{Core free}
This section is dedicated to the proof of Theorem \ref{thm:core}. Clearly a subgroup with a non-trivial core contains some non-trivial conjugacy class. It is enough to fix a conjugacy class
$C_{g} = \{\gamma g \gamma ^{-1} \ | \ \gamma \in \Gamma\}$ for some $g \ne e$ and show that the collection
\begin{eqnarray*}
\CF(g) & = & \{\alpha \in \HEr \ | \ \alpha(\F_r)_x \not \supset C(g) {\text{ for $\mu$-almost all }} x \in X\} \\
& = &  \{\alpha \in \HEr \ | \ \alpha(g) {\text{ acts non trivially on $\mu$-almost every orbit of }}\alpha\}
\end{eqnarray*} is residual, as
$$\CF = \{ \alpha \in \HEr \ | \ \alpha(\F_r)_x {\text{ is almost surely core free}} \} = \bigcap_{g \in \F_r \setminus \{e\}} \CF(g)$$
The fact that $\CF(g)$ is $G_{\delta}$ follows directly from Lemma \ref{lem:Gd}. 

Given $\alpha \in \HEr$ and $\epsilon > 0$ we look for $\beta \in \HEr$ such that $d_u(\alpha, \beta) < \epsilon$ and $\beta \in \CF(g)$. Namely we want $\beta(g)$ to act non-trivially on almost all $\beta$-orbits. The strategy to prove this is to define such $\beta$ with $\beta(s_1) = \alpha(s_1)$ and make sure that $\beta(g)$ acts non-trivially on almost all $\langle \alpha(s_1) \rangle = \langle \beta(s_1) \rangle$ orbits. Recall that in the lean aperiodic model $\alpha(s_1) = \beta(s_1)$ is, by assumption aperiodic. Let us set $\sigma = \alpha(s_1) = \beta(s_1)$. 

Since $\CF(s_1^l) = \HEr, \forall 0 \ne l \in \Z$ we can assume to begin with that $g \not \in C({s_1^l})$ for any such $l$. Also since we are interested only in the conjugacy class of $g$ we may assume that $g$ can be represented by a cyclicly reduced word $g = w_sw_{s-1} \ldots w_1$ where $w_i \in S \sqcup S^{-1}$. By our  assumption that $g \not \in C({s_1^l})$, not all letters are $s_1$ or $s_1^{-1}$. Using the Rokhlin Lemma,  we can find a set $O \in \Bc$ with $\mu(O) < \epsilon/2(s+1)$ such that the collection of sets $O, \sigma O, \sigma^2 O \ldots , \sigma^{s}O$ are disjoint. One can find a permutation $\tau \in S_{s+1}$ with the property that:
\begin{itemize}
\item whenever $w_i = s_1$ then $\tau(i) = \tau(i-1)+1$
\item whenever $w_i = s_1^{-1}$ then $\tau(i)=\tau(i-1)-1$.
\end{itemize}
We denote the disjoint union of these sets by $W = \cup_{i = 0}^s \sigma^i (O) = \cup_{i=0}^s \sigma^{\tau(i)}(O)$. There is an obvious element $\xi \in [E]$ that permutes the sets $\{\sigma^i(O) \ | \ 0 \le i \le s \}$ according the the new cyclic order induced on them by the permutation $\tau$. Namely
$$\xi(x) = \left \{ \begin{array}{ll} \sigma^{\tau(i+1)-\tau(i)}(x) & {\text{ if }} x \in \sigma^{\tau(i)}(O) \\
x & {\text{ if }} x \not \in W \end{array} \right.$$
Where $\tau(i+1)$ is understood as $\tau(0)$ when $i = s$. Now using Lemma \ref{lem:rep} we can find some $\beta \in \HEr$ such that $d_u(\alpha,\beta) < \epsilon$ but with the property that
\begin{equation} \label{eqn:instructions}
\beta(w_i)(x) = \xi(x) \quad \forall x \in \sigma^{\tau(i-1)}(O)
\end{equation}
There are only two things that one has to verify. First that there are no contradictions in the instructions prescribed in Equation \ref{eqn:instructions} which follows directly from the fact that the word was assumed to be reduced and hence we are prescribing the actions of these elements on distinct sets. The second is that whenever $w_i \in \{s_1,s_1^{-1}\}$ the prescribed action actually coincides with that of $\sigma, \sigma^{-1}$. But the permutation $\tau$ was chased exactly in order to accommodate that.

Now we can conclude the theorem. Things were arranged in such a way that $\beta(g) (\sigma^{\tau(0)}(O)) = \sigma^{\tau(s)}(O)$. And since, by ergodicity, $\sigma^{\tau(0)}(O)$ intersects almost every orbit, we are done.

\begin{remark}
It is possible to extend the statement of the theorem to arbitrary aperiodic equivalence relations. This can be done by finding a set $O$ in Rokhlin's lemma that has a positive measure with respect to almost every fiber measure in the ergodic decomposition. 
\end{remark}

\noindent {\bf Acknowledgments.}
Both authors were supported by ISF grant 441/11. Y.G. acknowledges support from U.S. National Science Foundation grants DMS 1107452, 1107263, 1107367 ``RNMS: Geometric structures And Representation varieties" (the GEAR Network). This work was written while Y.G. was on sabbatical at the University of Utah. I am  very grateful to hospitality of the math department there and to the NSF grants that enabled this visit. We are both very grateful to the referee whose excellent remarks improved the paper in numerous places. In particular for providing the two lemmas in section 5 and suggesting Corollary \ref{cor:conc}. 

\bibliographystyle{alpha}
\bibliography{../tex_utils/yair}

\noindent {\sc Amichai Eisenmann.} Department of Mathematics.
Ben-Gurion University of the Negev.
P.O.B. 653,
Be'er Sheva 84105,
Israel.
{\tt amichaie\@@math.bgu.ac.il}\bigskip

\noindent {\sc Yair Glasner.} Department of Mathematics.
Ben-Gurion University of the Negev.
P.O.B. 653,
Be'er Sheva 84105,
Israel.
{\tt yairgl\@@math.bgu.ac.il}\bigskip

\end{document}